\newtheorem{thm}{Theorem}[section]
\newtheorem{cor}[thm]{Corollary}
\newtheorem{prop}[thm]{Proposition}
\newtheorem{lem}[thm]{Lemma}
\theoremstyle{definition}
\newtheorem{quest}[thm]{Question}
\newtheorem{conj}[thm]{Conjecture}\theoremstyle{remark}
\newcommand{\R}{\mathbb{R}}
\newcommand{\F}{\mathbb{F}}
\newcommand{\sub}{\subseteq}
\newcommand{\NL}{\mathcal{L}}
\newcommand{\pf}{\tilde{v}}
\newcommand{\Eig}[2]{E_{#2}(#1)}
\newcommand{\q}[1]{[#1]_q}
\newcommand{\qchoose}[2]{{#1\brack #2}_q}
\title{Polynomial Relations between Matrices of Graphs}
\author{Sam Spiro}
\address{University of California, San Diego}
\begin{document}
	\begin{abstract}
		We derive a correspondence between the eigenvalues of the adjacency matrix $A$ and the signless Laplacian matrix $Q$ of a graph $G$ when $G$ is $(d_1,d_2)$-biregular by using the relation $A^2=(Q-d_1I)(Q-d_2I)$.  This motivates asking when it is possible to have $X^r=f(Y)$ for $f$ a polynomial, $r>0$, and $X,\ Y$ matrices associated to a graph $G$.  It turns out that, essentially, this can only happen if $G$ is either regular or biregular.
	\end{abstract}
	\maketitle
	
	{\bf Keywords:} Combinatorics; Spectral Graph Theory; Biregular Graphs.
	\section{Introduction}
	For $G$ a simple graph, we let $A$ denote the adjacency matrix of $G$ and $D$ the diagonal matrix of vertex degrees of $G$.  We define the signless Laplacian matrix $Q$ of $G$ by $Q=D+A$, the Laplacian matrix $L$ of $G$ by $L=D-A$, and when $G$ has no isolated vertices, we define the normalized Laplacian matrix $\NL$ of $G$ by $D^{-1/2}LD^{-1/2}$.  For more detailed information about these matrices see, for example, \cite{chungButler}.
	
	A graph $G$ is said to be $(d_1,d_2)$-biregular (sometimes called semiregular) if $V_1\cup V_2$ is a partition of the vertices of $G$ such that no two vertices of $V_i$ are adjacent to one another (so $G$ is bipartite), and for all $v\in V_i,\ d_v=d_i$.  A graph is said to be biregular if it is $(d_1,d_2)$-biregular for some $d_1,d_2$.
	
	When $G$ is biregular it is possible to directly relate the eigenvalues of $A$ and $Q$ through the following formula of \cite{signless}.
	
	\begin{thm}\label{T-signlessPolynomial}
		If $G$ is $(d_1,d_2)$-biregular with $|V_i|=n_i,\ n_1\ge n_2$, and $\lambda_1,\ldots,\lambda_{n_2}$ are the $n_2$ largest eigenvalues of $A$ in decreasing order,  then 
		\[
		Q_G(x)=x(x-d_1-d_2)(x-d_1)^{n_1-n_2}\prod_{i=2}^{n_2}((x-d_1)(x-d_2)-\lambda_i^2),
		\]
		where $Q_G(x)$ denotes the characteristic polynomial of $Q$.
	\end{thm}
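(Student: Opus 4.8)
The plan is to exploit the relation $A^2 = (Q - d_1 I)(Q - d_2 I)$ advertised in the abstract, which holds whenever $G$ is $(d_1,d_2)$-biregular. First I would verify this identity: writing $Q = D + A$, one has $(Q - d_1 I)(Q - d_2 I) = (D - d_1 I + A)(D - d_2 I + A)$. Using the block structure coming from the bipartition $V_1 \cup V_2$, the matrix $D - d_1 I$ vanishes on the $V_1$ coordinates and equals $(d_2 - d_1) I$ on the $V_2$ coordinates, and symmetrically for $D - d_2 I$; moreover $A$ is block-antidiagonal, so $A$ maps $V_1$-vectors to $V_2$-vectors and vice versa. Multiplying out the block $2\times 2$ matrices, the cross terms involving $A$ and the diagonal pieces telescope, and the surviving term is $A^2$ (one should check $(D-d_1I)(D-d_2I) = 0$ since on each $V_i$ one factor vanishes, and $(D-d_1I)A + A(D-d_2I) = 0$ because $A$ swaps the two blocks where the two diagonal differences are $0$ and $d_i - d_j$ with opposite sign). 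This yields $A^2 = (Q-d_1I)(Q-d_2I)$.

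Next I would set up the spectral bookkeeping. Since $G$ is bipartite, the spectrum of $A$ is symmetric about $0$, and with $n_1 \ge n_2$ there are exactly $n_1 - n_2$ zero eigenvalues forced by the imbalance, the remaining nonzero eigenvalues coming in pairs $\pm\lambda_i$ for $i = 2, \dots, n_2$ (with $\lambda_1$ the Perron value and $-\lambda_1$ also present). The key point is that $Q$ and $A^2$ do not commute in general, but $A^2$ \emph{does} commute with $Q$ on the relevant invariant subspaces — more precisely, one wants a simultaneous eigenbasis. The clean way: from $A^2 = (Q - d_1 I)(Q - d_2 I)$ one does \emph{not} immediately get that $Q$-eigenvectors are $A^2$-eigenvectors. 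Instead I would argue via the bipartite block decomposition directly. Writing $A = \begin{pmatrix} 0 & B \\ B^T & 0 \end{pmatrix}$ with $B$ the $n_1 \times n_2$ biadjacency matrix, we have $A^2 = \begin{pmatrix} BB^T & 0 \\ 0 & B^TB \end{pmatrix}$ and $Q = \begin{pmatrix} d_1 I & B \\ B^T & d_2 I \end{pmatrix}$. The singular value decomposition of $B$ gives orthonormal bases in which $BB^T$ and $B^TB$ are simultaneously diagonalized with shared nonzero singular values $\sigma_i^2 = \lambda_i^2$; pairing the matched left/right singular vectors produces a two-dimensional $Q$-invariant subspace on which $Q$ acts as $\begin{pmatrix} d_1 & \sigma_i \\ \sigma_i & d_2 \end{pmatrix}$, whose characteristic polynomial is $(x - d_1)(x - d_2) - \sigma_i^2 = (x-d_1)(x-d_2) - \lambda_i^2$.

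Then I would assemble the characteristic polynomial of $Q$ as a product over these invariant pieces. The $n_2$ largest eigenvalues $\lambda_1 \ge \dots \ge \lambda_{n_2}$ of $A$ correspond to the singular values of $B$ (with $\lambda_1$ the Perron value); among these, $\lambda_1 = \sqrt{d_1 d_2}$ on the Perron vector, and the corresponding $2\times 2$ block has characteristic polynomial $(x-d_1)(x-d_2) - d_1 d_2 = x(x - d_1 - d_2)$, which explains the isolated factors $x(x - d_1 - d_2)$. The remaining $n_1 - n_2$ dimensions of the $V_1$-block correspond to the kernel of $B^T$ (equivalently the $0$-eigenspace of $BB^T$ beyond the matched part), on which $Q$ acts as $d_1 I$, giving the factor $(x - d_1)^{n_1 - n_2}$. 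Multiplying the contribution $x(x-d_1-d_2)$ from $i=1$, the $(x-d_1)^{n_1-n_2}$ from the unmatched part, and $\prod_{i=2}^{n_2}\bigl((x-d_1)(x-d_2) - \lambda_i^2\bigr)$ from $i = 2, \dots, n_2$ gives the claimed formula, and a degree count confirms $1 + 1 + (n_1 - n_2) + 2(n_2 - 1) = n_1 + n_2$ as required.

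The main obstacle I anticipate is not any single computation but the careful matching of multiplicities: one must correctly identify which singular value of $B$ is the Perron value, confirm that $\lambda_1 = \sqrt{d_1 d_2}$ (this uses biregularity: the all-ones-type vectors scaled appropriately on $V_1$ and $V_2$ are the Perron vectors of $BB^T$ and $B^TB$), and account for the $n_1 - n_2$ extra zeros of $A$ in a way consistent with the statement isolating them as $(x - d_1)^{n_1 - n_2}$ rather than folding them into the product. A secondary subtlety is handling possible coincidences among the $\lambda_i$ (repeated singular values), but since the argument is basis-free on each singular-value eigenspace this causes no real trouble — the $2 \times 2$ block description extends blockwise.
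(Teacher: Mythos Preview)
Your proof is correct and takes a genuinely different route from the paper's. The paper proves the identity $A^2=(Q-d_1I)(Q-d_2I)$ entrywise and then argues \emph{from $Q$ to $A^2$}: any $Q$-eigenvector with eigenvalue $q$ is an $A^2$-eigenvector with eigenvalue $(q-d_1)(q-d_2)$, and the hard work is going backwards to pin down multiplicities. This requires a separate lemma that no eigenvector of $A$ with nonzero eigenvalue can also be an eigenvector of $Q$ (so the two $Q$-eigenspaces over each nonzero $\lambda^2$ split evenly), plus a separate analysis of the zero eigenspace via the ranks of the biadjacency matrix and its transpose. Your SVD approach bypasses all of this: you decompose $\mathbb{R}^{n_1+n_2}$ directly into $Q$-invariant $2\times 2$ blocks $\begin{pmatrix} d_1 & \sigma_i \\ \sigma_i & d_2 \end{pmatrix}$ plus a leftover $(n_1-n_2)$-dimensional piece on which $Q$ acts as $d_1 I$, and read off the characteristic polynomial blockwise. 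This is more direct and avoids the dimension-counting lemmas entirely; the paper's approach, on the other hand, foregrounds the polynomial relation $A^2=(Q-d_1I)(Q-d_2I)$ as the conceptual driver, which is the point since the rest of the paper asks when such relations $X^r=f(Y)$ can exist.

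Two small remarks on your write-up. First, you derive $A^2=(Q-d_1I)(Q-d_2I)$ but then never use it---your SVD argument is self-contained---so you could drop that paragraph or note explicitly that it is only motivational. Second, your sentence ``there are exactly $n_1-n_2$ zero eigenvalues forced by the imbalance'' states a lower bound as an equality; in fact $\ker B^T$ may be larger than $n_1-n_2$, but (as you correctly note later under ``beyond the matched part'') the extra zeros pair off with $\ker B$ and are absorbed into the product as factors $(x-d_1)(x-d_2)-0$, so the final formula is unaffected.
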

	
	Theorem~\ref{T-signlessPolynomial} was proved in \cite{signless} by using arguments involving the line graph of $G$.  In this paper we present an independent derivation of this theorem using the relation
	\[
	A^2=(Q-d_1I)(Q-d_2I).
	\]
	
	Given this derivation, it is natural to ask what other graphs $G$ satisfy $A^r=f(Q)$ for some polynomial $f$ and positive integer $r$. More generally, one can ask what $G$ satisfy $X^r=f(Y)$ when $X$ and $Y$ are matrices associated to the graph  $G$.  Of the cases we consider, the only graphs found to have this property are graphs that are either regular or biregular.  We summarize our results in the following theorem, where we note that the first part of the theorem is clear from the definitions of $Q,\ L$ and $\NL$.
	
	\begin{thm}
		Let $G$ be a connected graph, $r$ a positive integer and $f$ a polynomial.
		\begin{itemize}
			\item If $G$ is regular, then $X^r=f(Y)$ can occur if $X,\ Y$ are any  of $A,\ Q,\ L$ or $\NL$.  Moreover, all of these matrices can be related to one another by a linear equation.
			\item If $G$ is $(d_1,d_2)$-biregular, then $X^r=f(Y)$ can occur if $X=A$ and $Y=Q,\ L$, or $\NL$, or when $X=\NL$ and $Y=A$. Specifically, we have
			\begin{align*}
			(Q-d_1I)(Q-d_2I)&=(L-d_1I)(L-d_2I)=A^2\\ 
			\mathcal{L}&=I-\frac{1}{\sqrt{d_1d_2}}A.
			\end{align*}
			Moreover, if $X,\ Y$ are any other pair from $A,\ Q,\ L, \NL$ then no such  relation exists.
			\item If $G$ is not regular or biregular, then $X^r=f(Y)$ can not hold if $X,\ Y$ are any distinct matrices of $A,\ Q,\ L,\ \NL$, except possibly for the case $A^r=f(\NL)$.
		\end{itemize}
	\end{thm}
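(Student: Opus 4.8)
The plan is to prove each of the three bullet points separately, exploiting the structural meaning of each matrix. For the regular case, if $G$ is $d$-regular then $D=dI$, so $Q=dI+A$, $L=dI-A$, and $\mathcal{L}=I-\frac1d A$; these are all linear (indeed affine) functions of $A$ and of each other, so $X^1=f(Y)$ holds trivially with $f$ linear. This is essentially immediate from the definitions, as the excerpt already notes. For the biregular case, the displayed identities must be checked: $\mathcal{L}=I-\frac{1}{\sqrt{d_1d_2}}A$ follows because on the bipartite vertex set the diagonal matrix $D^{1/2}$ acts as $\sqrt{d_1}$ on $V_1$ and $\sqrt{d_2}$ on $V_2$, and $D^{-1/2}AD^{-1/2}$ has entry $\frac{1}{\sqrt{d_1d_2}}$ exactly on edges (which all go between $V_1$ and $V_2$); and $(Q-d_1I)(Q-d_2I)=A^2$ can be verified by writing $Q-d_iI=(D-d_iI)+A$ and expanding, using that $(D-d_1I)(D-d_2I)=0$ (since $D-d_iI$ vanishes on $V_i$ and the two supports are complementary) together with $(D-d_1I)A+A(D-d_2I)$ — this last cross term vanishes because $A$ maps $V_1$-supported vectors to $V_2$-supported ones and vice versa, so $(D-d_1I)A$ kills $V_2$-inputs while $A(D-d_2I)$ kills… here one must be slightly careful with which factor sits where, but the bipartite structure forces the cancellation. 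The same computation with $L=D-A$ in place of $Q$ gives the parallel identity. The case $\mathcal{L}^r = f(A)$ reduces via the above linear relation to the regular case and is handled by the nonexistence arguments below when $G$ is not regular.

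The heart of the paper is the negative direction: showing that if $X^r=f(Y)$ holds for a connected $G$ with $X,Y$ distinct among $A,Q,L,\mathcal{L}$, then (apart from the listed exceptions and the possibly-open case $A^r=f(\mathcal{L})$) the graph must be regular or biregular. The key tool I would use is that $X^r=f(Y)$ forces $X$ and $Y$ to \emph{commute} ($XY=YX$), and more importantly that every eigenspace of $Y$ is an invariant subspace of $X$ on which $X^r$ acts as the scalar $f(\lambda)$. I would then extract combinatorial information by testing against well-chosen vectors: the all-ones vector $\mathbf 1$, and the indicator vectors $e_v$ of vertices. For instance, $L\mathbf 1=0$ always, so $\mathbf 1$ lies in the $0$-eigenspace of $L$; if $A^r=f(L)$ then $A^r\mathbf 1=f(0)\mathbf 1$, and the $v$-entry of $A^r\mathbf 1$ is the number of walks of length $r$ from $v$, which being constant in $v$ is a strong regularity-type condition. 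Pushing this with $r=1$ is immediate (constant walk count of length $1$ is just constant degree, i.e. regularity), and for larger $r$ one chains the condition through the spectral decomposition: $A$ has the same eigenvectors as $L$, hence $A=g(L)$ would follow if the eigenvalues of $L$ were distinct, and in general $A$ is a polynomial in $L$ on each eigenspace, which lets one run the classical argument that a graph whose adjacency matrix is a polynomial in its Laplacian (or signless Laplacian) must be regular. The pair $A$ vs $Q$ is handled the same way, and $L$ vs $Q$ via $Q-L=2D$, so that $L^r=f(Q)$ relates $D$ polynomially to $Q=D+A$ and $L=D-A$ — again forcing commutation of $D$ with $A$, which on a connected graph forces $D$ scalar, i.e. regularity, \emph{unless} the pattern of degree-classes is exactly the bipartite biregular one.

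The main obstacle is the $\mathcal{L}$ versus $Q$ or $L$ comparisons, and disentangling when the exceptional biregular identities are the \emph{only} surviving relations rather than artifacts. Concretely, $\mathcal{L}=I-D^{-1/2}AD^{-1/2}$ mixes $A$ with $D^{\pm1/2}$ in a non-polynomial way, so the clean ``common eigenvector'' argument no longer directly applies; I expect to handle this by conjugating, i.e. studying $D^{1/2}\mathcal{L}D^{1/2}=L$ and analyzing $D$ as an unknown positive diagonal matrix, then arguing that a polynomial relation between $L$ (or $Q$) and $\mathcal{L}$ forces $D$ to take at most two values distributed in the bipartite pattern. The other delicate point is being honest about the genuinely open case $A^r=f(\mathcal{L})$: here $f(\mathcal{L})=f(I-D^{-1/2}AD^{-1/2})$ involves $D^{-1/2}$ to high powers and I do not expect the walk-counting argument to close it, which is why the theorem statement explicitly exempts it. So the proof will consist of (i) the easy constructive identities, (ii) a uniform ``common eigenbasis $\Rightarrow$ polynomial-in-each-other on eigenspaces $\Rightarrow$ regularity'' argument for the $A,Q,L$ triangle, (iii) a degree-class refinement of that argument identifying the biregular exceptions, (iv) the conjugation trick to bring $\mathcal{L}$ into the fold for all comparisons except $A^r=f(\mathcal{L})$, and (v) an explicit acknowledgment that the last case is left open.
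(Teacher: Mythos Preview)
Your constructive identities are fine, and in fact your matrix-algebraic verification of $(Q-d_1I)(Q-d_2I)=A^2$ via $(D-d_1I)(D-d_2I)=0$ plus the bipartite cancellation of the cross terms is a cleaner alternative to the paper's entry-by-entry computation.

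The negative direction, however, rests on a false premise. You assert that $X^r=f(Y)$ forces $XY=YX$, and that every eigenspace of $Y$ is $X$-invariant. Neither holds: $X^r=f(Y)$ only tells you that $X^r$ commutes with $Y$, and passing from $X^r$ back to $X$ fails precisely when distinct eigenvalues of $X$ collide under $\mu\mapsto\mu^r$. Concretely, if $X$ is symmetric with eigenvalues $\lambda$ and $-\lambda$ then $X^2$ acts as a scalar on their combined eigenspace, so $X^2$ can commute with a $Y$ that scrambles those two subspaces while $X$ does not. This is exactly the situation for $X=A$ on a bipartite graph (spectrum symmetric about $0$), which is the case you most need. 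Your subsequent steps --- ``$A$ has the same eigenvectors as $L$'', ``forcing commutation of $D$ with $A$'' --- all inherit this gap. (You also write $Q-L=2D$; it is $Q-L=2A$.)

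The paper avoids commutation entirely. Its mechanism is to transfer \emph{one specific eigenvector} between $X$ and $Y$, chosen so that its being a common eigenvector already pins down the degree sequence. For $X\in\{A,Q\}$ the Perron--Frobenius eigenvector $\tilde v$ (strictly positive, one-dimensional eigenspace) is the workhorse: if it is also an eigenvector of the other matrix then $D\tilde v$ is a scalar multiple of $\tilde v$, forcing $D$ scalar and $G$ regular. For $L$ the vector $\mathbf 1$ plays this role, for $\mathcal L$ it is $D^{1/2}\mathbf 1$, and for bipartite $G$ the signed vector $\mathbf 1'$ enters. The delicate point you missed is that the transfer from $Y$-eigenvector to $X$-eigenvector is justified not by commutation but by (i) nonnegativity of the spectrum of $X$ (so $\mu\mapsto\mu^r$ is injective and $E_X(\mu)=E_{X^r}(\mu^r)$), or (ii) one-dimensionality of the relevant eigenspace together with $E_X(\mu)=E_{X^r}(\mu^r)$ for that particular $\mu$. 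Your walk-counting observation that $A^r\mathbf 1=f(0)\mathbf 1$ is correct and is in fact part of the paper's argument for $A^r=f(L)$ in the even-$r$ bipartite case, but it is the Perron--Frobenius ingredient, not a global common eigenbasis, that closes the remaining cases.
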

	
	We establish the following conventions.  Whenever $X^r=f(Y)$ is written it is assumed that $f$ is a polynomial and $r$ is a positive integer.  We assume throughout this paper that $G$ is a connected graph, though we emphasize this point in the statement of our theorems. $\mathbf{1}$ will denote the vector of all 1's.  $m_i(u,v)$ will denote the number of walks of length $i$ between the vertices $u$ and $v$ in the graph $G$. We note that $(A^i)_{uv}=m_i(u,v)$ (see Theorem 1.1 of \cite{stanleyBook}, for example).  For a matrix $M$ we let $\Eig{M}{\lambda}$ denote the eigenspace of $M$ with corresponding eigenvalue $\lambda$.  $V(G)$ and $E(G)$ will denote the set of vertices and the set of edges of the graph  $G$ respectively.
	
	The structure of the paper is as follows.  In Section~\ref{S-rel} we derive Theorem~\ref{T-signlessPolynomial} and in Section~\ref{S-tree} we apply this theorem to count the number of spanning trees of biregular graphs.  In Sections \ref{S-Q} and \ref{S-NL} we establish necessary conditions for $G$ to satisfy $X^r=f(Y)$ with $X,\ Y$ equal to $A,\ Q,\ L$, and $\NL$.  Lastly, in Section~\ref{S-gen} we briefly explore the more general question of establishing relations of the form $f(X)=g(Y)$ where $f$ and $g$ are both polynomials and $X$ and $Y$ are matrices associated to a graph $G$.

	\section{Relating Eigenvalues of $A$ and $Q$}\label{S-rel}
	
	\begin{prop}\label{P-keyrel}
		If $G$ is a $(d_1,d_2)$-biregular graph, then
		\[
		A^2=(Q-d_1I)(Q-d_2I).
		\]
	\end{prop}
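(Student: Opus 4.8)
The plan is to expand the right-hand side using $Q=D+A$ and exploit the bipartite structure of $G$ to kill every term except $A^2$. First I would set $D_i'=D-d_iI$ for $i=1,2$. Since $G$ is $(d_1,d_2)$-biregular, $D_1'$ is the diagonal matrix that is $0$ in the rows indexed by $V_1$ and equal to $d_2-d_1$ in the rows indexed by $V_2$, while $D_2'$ is $0$ in the rows indexed by $V_2$ and equal to $d_1-d_2$ in the rows indexed by $V_1$. Then
\[
(Q-d_1I)(Q-d_2I)=(D_1'+A)(D_2'+A)=D_1'D_2'+D_1'A+AD_2'+A^2,
\]
so it suffices to prove that $D_1'D_2'=0$ and that $D_1'A+AD_2'=0$.

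The first identity is immediate: $D_1'$ and $D_2'$ are diagonal, and for every vertex $v$ at least one of the diagonal entries $(D_1')_{vv}$, $(D_2')_{vv}$ vanishes (the former when $v\in V_1$, the latter when $v\in V_2$), so their entrywise product is zero. For the second identity I would compute the $(u,v)$ entry directly:
\[
(D_1'A+AD_2')_{uv}=(d_u-d_1)A_{uv}+A_{uv}(d_v-d_2)=A_{uv}\bigl((d_u-d_1)+(d_v-d_2)\bigr).
\]
This is nonzero only when $u\sim v$, and in that case, since $G$ is bipartite with parts $V_1,V_2$, exactly one of $u,v$ lies in $V_1$ and the other in $V_2$; in either case $(d_u-d_1)+(d_v-d_2)=(d_1-d_1)+(d_2-d_2)=0$ (or the same with the roles of $V_1,V_2$ swapped). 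Hence $D_1'A+AD_2'=0$, and combining the two identities gives $(Q-d_1I)(Q-d_2I)=A^2$.

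I do not anticipate a genuine obstacle here; the only point requiring care is the bookkeeping of the two cases $u\in V_1$ versus $u\in V_2$, and making sure the hypothesis that $G$ has no edges inside either $V_i$ is invoked at exactly the step where the degree-shift terms are forced to cancel. Everything else is a routine expansion.
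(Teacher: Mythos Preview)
Your proof is correct and follows essentially the same approach as the paper: a direct entrywise verification that the two sides agree, using that adjacent vertices in a $(d_1,d_2)$-biregular graph have degrees summing to $d_1+d_2$. Your algebraic organization via $Q-d_iI=(D-d_iI)+A$ is slightly cleaner than the paper's version, which computes the $(u,v)$ entry of the product in one stroke and then splits into cases according to whether $u$ and $v$ lie in the same part, but the underlying computation is identical.
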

	\begin{proof}
		Let $Q'=(Q-d_1I)(Q-d_2I)$.  By definition, $Q'_{uv}$ is equal to the dot product of the $u$th row of $Q-d_1 I$ with the $v$th column of $Q-d_2I$.  From these definitions we have that \begin{align*}
		Q'_{uv}=(d_u-d_1)m_1(u,v)+(d_v-d_2)m_1(v,u)+&\sum_{w\ne u,v}m_1(u,w)m_1(w,v)\\ =(d_u+d_v-d_1-d_2)m_1(u,v)+&\sum_{w\ne u,v}m_1(u,w)m_1(w,v).
		\end{align*}  If $u,v\in V_i,$ then $m_1(u,v)=0$ and we are left with $\sum_{w\ne u,v}m_1(u,w)m_1(w,v)=m_2(u,v)$.  If, say, $u\in V_1,v\in V_2$, then $m_1(u,w)m_1(w,v)=0$ for all $w\ne u,v$, and further, $d_u+d_v-d_1-d_2=0$.  Thus in this case $Q'_{uv}=0=m_2(u,v)$ (since the graph is bipartite and $u,v$ belong to different partition classes).  We conclude that for all $u,v$ that $Q'_{uv}=m_2(u,v)=(A^2)_{uv}$, completing the proof.
	\end{proof}
	We note that through an analogous computation one can show that if $G$ is biregular, then \[A^2=(L-d_1I)(L-d_2I).\]
	We also note that every result in this section remains valid, except for straightforward changes of some signs, if one replaces $Q$ with $L$.

	\begin{cor}\label{C-QtoA}
		Let $G$ be a $(d_1,d_2)$-biregular graph.  If $q_1,\ldots,q_n$ are the eigenvalues of $Q$, then $(q_1-d_1)(q_1-d_2),\ldots,(q_n-d_1)(q_n-d_2)$ are the eigenvalues of $A^2$.  
		
		Moreover, if $\lambda$ is an eigenvalue of $A^2$ and $q_1,q_2$ are the solutions to the equation $\lambda=(x-d_1)(x-d_2)$, then $\Eig{Q}{q_1}\oplus\Eig{Q}{q_2}=\Eig{A^2}{\lambda}$.
	\end{cor}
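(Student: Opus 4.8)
The plan is to exploit Proposition~\ref{P-keyrel} together with the fact that $Q$, being a real symmetric matrix, is diagonalizable. For the first claim I would fix a basis of eigenvectors $v_1,\dots,v_n$ of $Q$, with $Qv_i=q_iv_i$, and apply Proposition~\ref{P-keyrel} to get
\[
A^2v_i=(Q-d_1I)(Q-d_2I)v_i=(q_i-d_1)(q_i-d_2)v_i.
\]
Thus each $v_i$ is an eigenvector of $A^2$ with eigenvalue $(q_i-d_1)(q_i-d_2)$, and since $v_1,\dots,v_n$ is a basis of the whole space, the numbers $(q_i-d_1)(q_i-d_2)$, $i=1,\dots,n$, are exactly the eigenvalues of $A^2$, counted with multiplicity.

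For the second claim, fix an eigenvalue $\lambda$ of $A^2$ and let $q_1,q_2$ be the roots of $(x-d_1)(x-d_2)=\lambda$; these are real, since by the first part $\lambda=(q-d_1)(q-d_2)$ for some real eigenvalue $q$ of $Q$, so the monic quadratic $(x-d_1)(x-d_2)-\lambda$ has a real root and hence splits over $\R$. The inclusion $\Eig{Q}{q_1}\oplus\Eig{Q}{q_2}\subseteq\Eig{A^2}{\lambda}$ is immediate from the displayed computation above: any $v$ with $Qv=q_jv$ satisfies $A^2v=(q_j-d_1)(q_j-d_2)v=\lambda v$. The sum is direct because, when $q_1\ne q_2$, eigenspaces of $Q$ for distinct eigenvalues meet only in $0$ (and when $q_1=q_2$ the claimed direct sum is just $\Eig{Q}{q_1}$, so there is nothing to check). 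For the reverse inclusion I would use the eigenspace decomposition $\R^n=\bigoplus_q\Eig{Q}{q}$ over the eigenvalues $q$ of $Q$: writing $w\in\Eig{A^2}{\lambda}$ as $w=\sum_q w_q$ with $w_q\in\Eig{Q}{q}$, Proposition~\ref{P-keyrel} gives $\lambda w=A^2w=\sum_q(q-d_1)(q-d_2)w_q$, and comparing with $\lambda w=\sum_q\lambda w_q$ and invoking uniqueness of the decomposition forces $w_q=0$ whenever $(q-d_1)(q-d_2)\ne\lambda$, i.e. whenever $q\notin\{q_1,q_2\}$. Hence $w\in\Eig{Q}{q_1}+\Eig{Q}{q_2}$, and equality of subspaces follows.

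I do not expect a genuine obstacle here; the corollary is essentially a formal consequence of the polynomial identity $A^2=(Q-d_1I)(Q-d_2I)$. The only points that need care are invoking diagonalizability of $Q$ (rather than of $A^2$) so that both the transfer of eigenvalues and the decomposition argument for the reverse inclusion go through, and separating out the degenerate case $q_1=q_2$ in the statement about direct sums.
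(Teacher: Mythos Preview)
Your argument is correct and follows essentially the same approach as the paper: diagonalize $Q$ via an eigenbasis (using symmetry of $Q$), apply Proposition~\ref{P-keyrel} to each basis vector, and read off both the eigenvalues of $A^2$ and the eigenspace decomposition. The paper is terser on the eigenspace equality---it simply observes that the eigenbasis of $Q$ simultaneously diagonalizes $A^2$, so the $\lambda$-eigenspace of $A^2$ is exactly the span of those $v_i$ with $(q_i-d_1)(q_i-d_2)=\lambda$---whereas you spell out both inclusions and the directness explicitly; but the underlying idea is identical.
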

	\begin{proof}
		Let $V'=\{v_1,\ldots,v_n\}$ be a basis of eigenvectors of $Q$ with $Qv_i=q_i v_i$, which exists because $Q$ is symmetric and hence diagonalizable.  Then \[A^2v_i=(Q-d_1I)(Q-d_2I)v_i=(q_i-d_1)(q_i-d_2)v_i,\] so $v_i$ will be an eigenvector of $A^2$ with the desired eigenvalue.  From this it is also clear that a basis for $\Eig{Q}{q_1}\oplus \Eig{Q}{q_2}$ is also a basis for $\Eig{A}{\lambda}$ when $q_1,q_2$ are the solutions to $\lambda=(x-d_1)(x-d_2)$.
	\end{proof}
	From Corollary~\ref{C-QtoA} it is possible to translate from the eigenvalues of $Q$ to the eigenvalues of $A$ when $G$ is biregular.  Namely, because $G$ is bipartite, $A$'s spectrum will be symmetric about 0 (see Proposition 3.4.1 of \cite{brouwer}), so knowing the eigenvalues (with multiplicity) of $A^2$ is equivalent to knowing the eigenvalues (with multiplicity) of $A$.  
	
	What is less obvious is that the converse of the above statement is true.  That is, given the eigenvalues of $A$ when $G$ is biregular, one can compute the eigenvalues of $Q$.  Certainly we know that if $\lambda^2$ is an eigenvalue of $A^2$ then $Q$ must have an eigenvalue $q$ satisfying $(q-d_1)(q-d_2)=\lambda^2$, but if $d_1\ne d_2$ then it is not clear which root of this equation correctly corresponds to the eigenvalue in $Q$ (if $d_1=d_2$ then $G$ is regular and there is only one root to choose).  To figure out the multiplicities of the eigenvalues of $Q$ we will need the following lemma.
	\begin{lem}\label{L-nonint}
		Let $G$ be $(d_1,d_2)$-biregular with $d_1\ne d_2$ and let $v$ be an eigenvector of $A$ with eigenvalue $\lambda\ne 0$.  Then $v$ is not an eigenvector of $Q$.
	\end{lem}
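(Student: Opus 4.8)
The plan is to bring the degree matrix $D$ into the picture and exploit the fact that $D$, being diagonal, has only the two distinct diagonal values $d_1$ and $d_2$. Suppose for contradiction that $v\neq 0$ is an eigenvector of $A$ with $Av=\lambda v$, $\lambda\neq 0$, and that $v$ is also an eigenvector of $Q$, say $Qv=qv$. Since $Q=D+A$, we obtain $Dv=(Q-A)v=(q-\lambda)v$, so $v$ is an eigenvector of the diagonal matrix $D$ with eigenvalue $q-\lambda$.

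Next I would pin down the support of $v$. Reading the equation $Dv=\mu v$ coordinatewise gives $(d_u-\mu)v_u=0$ for every vertex $u$, so $v_u=0$ whenever $d_u\neq\mu$. Because $G$ is $(d_1,d_2)$-biregular, every vertex has degree $d_1$ or $d_2$, and since $d_1\neq d_2$ the value $\mu$ must equal $d_1$ or $d_2$; hence $v$ is supported entirely on $V_1$ or entirely on $V_2$. This is the only step in which the hypothesis $d_1\neq d_2$ is used, and it is essential: when $d_1=d_2$ the matrix $D$ is scalar, every vector is an eigenvector of $D$, and the argument (indeed the conclusion) breaks down.

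Finally I would invoke bipartiteness to reach the contradiction. Say $v$ is supported on $V_1$ (the other case is symmetric). For each $u\in V_1$, every neighbor of $u$ lies in $V_2$, so $(Av)_u=\sum_{w\sim u}v_w=0$; thus $Av$ is supported on $V_2$. But $Av=\lambda v$ is supported on $V_1$, so $Av$ is the zero vector, whence $\lambda v=0$ and therefore $v=0$ since $\lambda\neq 0$, contradicting that $v$ is an eigenvector. I do not anticipate any serious obstacle here — the argument is short — and the only points requiring care are the coordinatewise analysis of $D$ (which is exactly where $d_1\neq d_2$ is needed) and the observation that the adjacency matrix of a bipartite graph sends vectors supported on one side to vectors supported on the other.
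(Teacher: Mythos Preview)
Your proof is correct and follows essentially the same route as the paper: deduce that $v$ is an eigenvector of $D$ from $D=Q-A$, use $d_1\neq d_2$ to conclude the support of $v$ lies in a single part $V_i$, and then observe that applying $A$ annihilates the $V_i$-coordinates, forcing $\lambda v=0$. The only cosmetic difference is that the paper finishes by picking a single vertex $u$ with $v_u\neq 0$ and deriving $\lambda=0$ directly, whereas you phrase it in terms of supports and conclude $v=0$; the content is identical.
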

	\begin{proof}
		Assume that $Qv=\mu v$.  Then $Dv=(Q-A)v=(\mu-\lambda)v$, so $v$ is also an eigenvalue of $D$.  This implies that $\mu-\lambda=d_i$ for $i=1$ or 2, and hence the set $V'=\{u:v_u\ne 0\}$ lies entirely in the corresponding $V_i$.  But if $u\in V'$ then $(Av)_u=0$, as all of the neighbors of $u$ belong to the other partition class and hence are given 0 weight in $v$. Since $v_u\ne 0$ and $\lambda v_u=(Av)u=0$, we must have $\lambda=0$, contradicting the assumption that this is not the case.
	\end{proof}
	
	\begin{lem}\label{L-split}
		For $G$ a $(d_1,d_2)$-biregular graph, let $\lambda^2\ne 0$ be an eigenvalue of $A^2$ with $\dim \Eig{A^2}{\lambda^2}=m$ and let $q_1,q_2$ be the two roots of the equation $\lambda^2=(x-d_1)(x-d_2)$.  Then $\dim \Eig{Q}{q_1}=\dim \Eig{Q}{q_2}=m/2$.
	\end{lem}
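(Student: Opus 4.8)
The plan is to exhibit two decompositions of the $m$-dimensional space $W := \Eig{A^2}{\lambda^2}$ and play them against each other. First, if $d_1=d_2$ the claim is immediate, since then $Q=d_1 I+A$, so $\Eig{Q}{d_1\pm\lambda}=\Eig{A}{\pm\lambda}$ and (see below) these have equal dimension; so I assume $d_1\ne d_2$, which is what makes Lemma~\ref{L-nonint} available. Since $\lambda^2$ is a positive eigenvalue of the positive semidefinite matrix $A^2$, I may take $\lambda$ real and nonzero; as $A$ is symmetric and commutes with $A^2$, it restricts to a diagonalizable operator on $W$ squaring to $\lambda^2 I$, so $W=\Eig{A}{\lambda}\oplus\Eig{A}{-\lambda}$. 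Bipartiteness of $G$ gives, via the diagonal $\pm1$ matrix $S$ recording the bipartition, an isomorphism $v\mapsto Sv$ from $\Eig{A}{\lambda}$ onto $\Eig{A}{-\lambda}$, so each summand has dimension $m/2$ (in particular $m$ is even).

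The second decomposition comes from Corollary~\ref{C-QtoA}: the discriminant $(d_1-d_2)^2+4\lambda^2$ of $\lambda^2=(x-d_1)(x-d_2)$ is positive, so $q_1\ne q_2$ and $W=\Eig{Q}{q_1}\oplus\Eig{Q}{q_2}$, whence $\dim\Eig{Q}{q_1}+\dim\Eig{Q}{q_2}=m$. To finish, I would bound $\dim\Eig{Q}{q_i}\le m/2$ for each $i$, forcing equality in both. For this, note a nonzero vector of $\Eig{Q}{q_i}\cap\Eig{A}{\lambda}$ would be a common eigenvector of $Q$ and of $A$ with nonzero eigenvalue $\lambda$, contradicting Lemma~\ref{L-nonint}; hence $\Eig{Q}{q_i}\cap\Eig{A}{\lambda}=\{0\}$, and since both subspaces live in the $m$-dimensional space $W$ with $\dim\Eig{A}{\lambda}=m/2$, the standard bound $\dim(U_1\cap U_2)\ge\dim U_1+\dim U_2-\dim W$ yields $\dim\Eig{Q}{q_i}\le m/2$.

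The only genuine input is Lemma~\ref{L-nonint}, already proved; the key realization is that it should be fed into a dimension count against $\Eig{A}{\lambda}$ rather than used on its own. Everything else is routine, and the point most worth checking carefully is that bipartiteness is deployed correctly to force $\dim\Eig{A}{\lambda}=\dim\Eig{A}{-\lambda}=m/2$, since the final inequality collapses without it.
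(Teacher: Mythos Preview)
Your proof is correct and follows essentially the same route as the paper's: decompose $\Eig{A^2}{\lambda^2}$ both as $\Eig{A}{\lambda}\oplus\Eig{A}{-\lambda}$ and as $\Eig{Q}{q_1}\oplus\Eig{Q}{q_2}$, invoke Lemma~\ref{L-nonint} to force $\Eig{Q}{q_i}\cap\Eig{A}{\lambda}=\{0\}$, and finish with a dimension count. You are in fact slightly more careful than the paper in two places: you separate out the case $d_1=d_2$ (needed since Lemma~\ref{L-nonint} assumes $d_1\ne d_2$), and you verify explicitly via the discriminant that $q_1\ne q_2$.
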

	\begin{proof}
		Note first that since $\lambda^2\ne 0$ and $G$ is bipartite, $m$ is even and $m/2$ is an integer.  Moreover, $\dim \Eig{A}{\lambda}=\dim \Eig{A}{-\lambda}=m/2$ and $\Eig{A}{\lambda}\oplus\Eig{A}{-\lambda}= \Eig{A^2}{\lambda^2}$.  By Corollary~\ref{C-QtoA} we have $\Eig{Q}{q_1}\oplus \Eig{Q}{q_2}= \Eig{A^2}{\lambda^2}$.  If $\dim \Eig{Q}{q_i}>m/2$, then we must have $\Eig{Q}{q_i}\cap \Eig{A}{\lambda}\ne \{0\}$ by a dimensionality argument, but this can't happen by Lemma~\ref{L-nonint} and from the assumption that $\lambda\ne 0$.  We conclude that $\dim \Eig{Q}{q_i}\le m/2$, and since $\dim \Eig{Q}{q_1}+\dim \Eig{Q}{q_2}=m$, we must have $\dim \Eig{Q}{q_1}=\dim \Eig{Q}{q_2}=m/2$.
	\end{proof}
	
	\begin{lem}\label{L-zeroes}
		If $G$ is a biregular graph with $|V_1|=n_1,\ |V_2|=n_2,\ n_1\ge n_2$ and $\dim \Eig{A}{0}=m$, then $\dim \Eig{Q}{d_1}=n_1-n_2+k,\ \dim \Eig{Q}{d_2}=k$ where $k=\frac{m-(n_1-n_2)}{2}$.
	\end{lem}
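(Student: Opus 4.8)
The plan is to write both eigenspace dimensions directly in terms of the rank of the biadjacency matrix of $G$ and then do the arithmetic. Order the vertices so that those of $V_1$ come first; since $G$ is bipartite,
\[
A=\begin{pmatrix}0&B\\ B^{T}&0\end{pmatrix},\qquad Q=D+A=\begin{pmatrix}d_1 I&B\\ B^{T}&d_2 I\end{pmatrix},
\]
where $B$ is the $n_1\times n_2$ biadjacency matrix. I will compute $\Eig{Q}{d_1}$ and $\Eig{Q}{d_2}$ from these blocks. (The case $d_1=d_2$, where $G$ is regular and there is only one root to speak of, is excluded throughout; in fact the statement fails there, e.g.\ $K_{3,3}$ has $\dim\Eig{Q}{3}=m$ but $k=m/2$.)

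Writing $v=(v^{(1)},v^{(2)})$ in the block decomposition, the equation $Qv=d_1v$ is equivalent to $Bv^{(2)}=0$ together with $B^{T}v^{(1)}=(d_1-d_2)v^{(2)}$. Using $d_1\ne d_2$, the second equation gives $v^{(2)}=(d_1-d_2)^{-1}B^{T}v^{(1)}$, and substituting into the first gives $BB^{T}v^{(1)}=0$, hence $v^{(1)}\in\ker B^{T}$ (as $\ker BB^{T}=\ker B^{T}$ over $\R$), which then forces $v^{(2)}=0$; conversely $(w,0)\in\Eig{Q}{d_1}$ whenever $B^{T}w=0$. Thus $\Eig{Q}{d_1}=\{(w,0):B^{T}w=0\}$, so with $r:=\operatorname{rank}B$ we get $\dim\Eig{Q}{d_1}=n_1-r$. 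The symmetric computation with $Qv=d_2v$ yields $\Eig{Q}{d_2}=\{(0,w):Bw=0\}$ and $\dim\Eig{Q}{d_2}=n_2-r$.

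It remains to eliminate $r$. Since $A$ is block anti-diagonal, $\operatorname{rank}A=2r$, so $m=\dim\ker A=n_1+n_2-2r$, i.e.\ $r=\tfrac12(n_1+n_2-m)$. Substituting,
\[
\dim\Eig{Q}{d_1}=\tfrac12(n_1-n_2+m)=(n_1-n_2)+k,\qquad \dim\Eig{Q}{d_2}=\tfrac12\bigl(m-(n_1-n_2)\bigr)=k,
\]
which is the claim; in particular the two dimensions sum to $m$, which also follows from Corollary~\ref{C-QtoA} applied to the eigenvalue $0$ of $A^{2}$ (whose zero-eigenspace coincides with that of $A$ since $A$ is symmetric, and for which the two roots of $0=(x-d_1)(x-d_2)$ are $d_1,d_2$).

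The computation is short, so there is no serious obstacle; the only points needing attention are the two places where $d_1\ne d_2$ is used — dividing by $d_1-d_2$ to solve for one block in terms of the other, and the identity $\ker BB^{T}=\ker B^{T}$ that then collapses the condition onto a single block — along with remembering that the statement genuinely requires $d_1\ne d_2$. Alternatively, one could avoid block matrices and derive the same two linear equations from Lemma~\ref{L-split} (which forces each pair of non‑kernel $Q$‑eigenvalues to occur with equal multiplicity and, by Vieta, to sum to $d_1+d_2$) together with $\Tr Q=\Tr D=d_1n_1+d_2n_2$.
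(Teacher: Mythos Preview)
Your proof is correct and lands on the same characterization the paper does: $\Eig{Q}{d_i}$ consists exactly of the null-vectors of $A$ supported on $V_i$, and hence has dimension $n_i-r$ where $r$ is the rank of the biadjacency matrix. The paper gets there by first observing the easy inclusion $Z_i:=\{v\in\ker A:\operatorname{supp}(v)\subseteq V_i\}\subseteq\Eig{Q}{d_i}$ and then invoking Corollary~\ref{C-QtoA} (so ultimately the relation $A^2=(Q-d_1I)(Q-d_2I)$) to force equality via $\Eig{Q}{d_1}\oplus\Eig{Q}{d_2}=\Eig{A}{0}=Z_1\oplus Z_2$; you instead solve $Qv=d_iv$ directly in block form, using $\ker BB^{T}=\ker B^{T}$ to collapse the system. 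Your route is slightly more elementary and self-contained, since it does not appeal to the main polynomial identity at all; the paper's route has the virtue of tying the computation back to the central theme. Your remark that both the statement and either proof genuinely require $d_1\ne d_2$ is accurate and worth flagging.
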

	\begin{proof}
		Let $Z_1$ denote the set of null-vectors of $A$ whose non-zero coordinates lie entirely in $V_1$, and similarly define $Z_2$.  It is not difficult to see that $Z_1\oplus Z_2=\Eig{A}{0}$.  Moreover, if $v\in Z_i$ then \[d_iv=Dv=Qv-Av=Qv,\] so $Z_i\sub \Eig{Q}{d_i}$.  Since $d_1,\ d_2$ are the unique roots of $0=(x-d_1)(x-d_2)$, it follows from Corollary~\ref{C-QtoA} that \[\Eig{Q}{d_1}\oplus \Eig{Q}{d_2}=\Eig{A}{0}=Z_1\oplus Z_2,\] and this implies that $Z_i=\Eig{Q}{d_i}$.  Thus it will be sufficient to prove that $\dim Z_1-\dim Z_2=n_1-n_2$.
		
		Let $M$ be the $n_1\times n_2$ sub-matrix of $A$ whose rows are indexed by $V_1$ and whose columns are indexed by $V_2$.  Let $r$ be the rank of this matrix.  Then the null-space of $M$ has dimension $n_2-r$.  Moreover if $v\in Z_2$, one can construct a vector $v'$ in the null space of $M$ by setting $v'_u=v_u$.  It isn't difficult to see that the correspondence between $v$ and $v'$ is a bijection between vectors of $Z_2$ and null-vectors of $M$, and moreover this mapping implies that $\dim Z_2=n_2-r$. The same argument on $M^T$ shows that $\dim Z_1=n_1-r$, and hence that $\dim Z_1-\dim Z_2=n_1-n_2$, proving the statement.
	\end{proof}
	
	\begin{proof}[Proof of Theorem~\ref{T-signlessPolynomial}]
		The characteristic polynomial of $Q$ is the monic polynomial whose roots are the eigenvalues of $Q$ with corresponding multiplicity.  For each positive eigenvalue $\lambda$ of $A$, the two roots of $(x-d_1)(x-d_2)-\lambda^2$ will be eigenvalues of  $Q$ by Lemma~\ref{L-split}.  Note that this will account for all of the eigenvalues of $Q$ except for the eigenvalues $d_1$ and $d_2$.  Also note that all of the positive eigenvalues of $A$ are included in the $n_2$ largest eigenvalues of $A$ because $G$ is bipartite.  
		
		If $A$ has $n_2-k$ positive eigenvalues, then it must have $n_1-n_2+2k$ eigenvalues equal to 0, meaning $Q$ has $d_1$ as an eigenvalue with multiplicity $n_1-n_2+k$ and $d_2$ with multiplicity $k$ by Lemma~\ref{L-zeroes}.  Thus if $\lambda_1,\ldots,\lambda_{n_2}$ are the $n_2$ largest eigenvalues of $A$, the eigenvalues of $Q$ agree with the roots of
		\begin{align*}
		(x-d_1)^{n_1-n_2}&\left(\prod_{i=1}^k (x-d_1)(x-d_2)\right)\left(\prod_{i=1}^{n_2-k}((x-d_1)(x-d_2)-\lambda_i^2)\right)\\ 
		&=(x-d_1)^{n_1-n_2}\prod_{i=1}^{n_2}((x-d_1)(x-d_2)-\lambda_i^2),
		\end{align*}
		so this must equal $Q_G(x)$.
		
		We lastly note the following fact stated in \cite{signless}: if $G$ is a connected $(d_1,d_2)$-biregular graph, then $\sqrt{d_1d_2}$ will be the largest eigenvalue of $A$, and the two roots of $d_1d_2=(x-d_1)(x-d_2)$ are $x=0$ and $x=d_1+d_2$.  Thus to get the exact form as written in Theorem~\ref{T-signlessPolynomial} we simply pull out the factor $(x-d_1)(x-d_2)-\lambda_1^2=x(x-d_1-d_2)$ from the product.
	\end{proof}
	
	\section{Spanning Trees}\label{S-tree}
	We provide an application of Theorem~\ref{T-signlessPolynomial}, namely that of counting spanning trees of biregular graphs. Our main tool will be the Matrix-Tree theorem, a proof of which can be found in \cite{stanleyBook}. 
	
	\begin{thm}[Matrix-Tree theorem]
		Let $\mu_1=0,\mu_2,\ldots,\mu_n$ denote the eigenvalues of the Laplacian matrix $L$ of $G$. The number of spanning trees of $G$ is equal to \[\frac{\mu_2\cdot \mu_3\cdots \mu_n}{|V(G)|}.\]
	\end{thm}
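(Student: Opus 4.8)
The plan is to deduce the eigenvalue formula from Kirchhoff's classical cofactor version of the Matrix--Tree theorem. First I would fix an arbitrary orientation of $E(G)$ and form the oriented incidence matrix $B$, the $|V(G)|\times|E(G)|$ matrix with $B_{v,e}=1$ if $v$ is the head of $e$, $-1$ if $v$ is the tail, and $0$ otherwise. A direct entrywise check gives $L=BB^T$: the diagonal entries recover the vertex degrees and the off-diagonal entries recover $-A$.

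Next I would establish Kirchhoff's theorem: for any vertex $w$, the cofactor $\det(L_w)$ obtained by deleting row and column $w$ from $L$ equals the number $\tau(G)$ of spanning trees. Writing $B_w$ for $B$ with row $w$ removed, we have $L_w=B_wB_w^T$, and the Cauchy--Binet formula yields $\det(L_w)=\sum_S \det(B_w[S])^2$, the sum over all $(n-1)$-subsets $S\subseteq E(G)$, where $n=|V(G)|$ and $B_w[S]$ is the corresponding square submatrix. The combinatorial heart of the argument is the claim that $\det(B_w[S])\in\{-1,0,1\}$, equalling $\pm1$ precisely when $S$ is the edge set of a spanning tree: if $S$ contains a cycle then the signed cycle vector makes the columns linearly dependent, while if $S$ is acyclic (hence a spanning tree, as $|S|=n-1$) one repeatedly deletes a leaf different from $w$ to bring $B_w[S]$ to triangular form with $\pm1$ on the diagonal. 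Hence $\det(L_w)=\tau(G)$, independently of $w$.

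Finally I would pass to the spectrum. Since $L$ is symmetric with $L\mathbf{1}=0$, its eigenvalues are $0=\mu_1,\mu_2,\dots,\mu_n$ and $\det(xI-L)=x\prod_{i=2}^n(x-\mu_i)$; the coefficient of $x$ on the right-hand side is $(-1)^{n-1}\mu_2\cdots\mu_n$. On the left-hand side, the coefficient of $x$ in the characteristic polynomial equals $(-1)^{n-1}$ times the sum of the $(n-1)\times(n-1)$ principal minors of $L$, each of which is a cofactor $\det(L_w)=\tau(G)$, and there are $n$ of them. Equating the two gives $\mu_2\cdots\mu_n=n\,\tau(G)$, that is, $\tau(G)=\mu_2\cdots\mu_n/|V(G)|$.

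The \textbf{main obstacle} is the unimodularity-plus-acyclicity lemma in the second step; the rest is linear-algebra bookkeeping (Cauchy--Binet, and the standard expansion of a characteristic polynomial in terms of principal minors). One also has to check that the leaf-peeling terminates without ever being forced to delete $w$ itself, which holds because a tree on at least two vertices always has at least two leaves.
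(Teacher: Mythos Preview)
Your argument is correct and follows the standard route: factor $L=BB^T$ via an oriented incidence matrix, apply Cauchy--Binet to a principal cofactor $L_w=B_wB_w^T$, verify the $\pm1/0$ lemma for square submatrices of $B_w$, and then read off $\mu_2\cdots\mu_n$ from the linear coefficient of the characteristic polynomial via the principal-minor expansion. The paper does not supply its own proof of the Matrix--Tree theorem; it merely states the result with a citation and uses it as a black box, so there is nothing substantive to compare against.
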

	
	If $G$ is bipartite then $Q$ and $L$ will have the same spectrum (see Proposition~1.3.10 of \cite{brouwer}).  Thus if we can compute the eigenvalues of $A$ when $G$ is biregular, we can use our previous results to obtain the eigenvalues of $Q$, and hence of $L$, in order to compute the number of spanning trees of $G$ by the Matrix-Tree theorem.
	
	\begin{thm}\label{T-application}
		If $G$ is a $(d_1,d_2)$-biregular graph with $|V_i|=n_i,\ n_1\ge n_2,$ and $\lambda_1,\ldots,\lambda_{n_2}$ are the largest eigenvalues of $A$, then the number of spanning trees of $G$ will be
		\[
		\frac{(d_1+d_2)d_1^{n_1-n_2}\prod_{i=2}^{n_2}(d_1d_2-\lambda_i^2)}{n_1+n_2}.
		\]
	\end{thm}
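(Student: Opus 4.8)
The plan is to feed the spectrum of $L$ into the Matrix-Tree theorem, obtaining that spectrum from Theorem~\ref{T-signlessPolynomial} via the fact (recalled just above) that a bipartite graph has $\mathrm{spec}(Q)=\mathrm{spec}(L)$. So I would begin by reading off the eigenvalues of $L$ as the roots, with multiplicity, of
\[
Q_G(x)=x(x-d_1-d_2)(x-d_1)^{n_1-n_2}\prod_{i=2}^{n_2}\bigl((x-d_1)(x-d_2)-\lambda_i^2\bigr):
\]
namely $0$, then $d_1+d_2$, then $d_1$ with multiplicity $n_1-n_2$, and finally, for each $i=2,\ldots,n_2$, the two roots of $x^2-(d_1+d_2)x+(d_1d_2-\lambda_i^2)=0$. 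Since $G$ is connected, $0$ is a simple eigenvalue of $L$, so it is exactly the ``$\mu_1$'' of the Matrix-Tree theorem, and $\mu_2\cdots\mu_n$ is the product of all the other eigenvalues listed above.

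The remainder is bookkeeping with Vieta's formulas: the product of the two roots of $x^2-(d_1+d_2)x+(d_1d_2-\lambda_i^2)$ is $d_1d_2-\lambda_i^2$. Hence
\[
\mu_2\mu_3\cdots\mu_n=(d_1+d_2)\,d_1^{\,n_1-n_2}\prod_{i=2}^{n_2}\bigl(d_1d_2-\lambda_i^2\bigr),
\]
and dividing by $|V(G)|=n_1+n_2$ yields the claimed count.

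Two small points warrant a sentence each rather than a genuine obstacle. First, one must be sure which factor of $Q_G(x)$ is the ``extra'' zero: as noted after the proof of Theorem~\ref{T-signlessPolynomial}, for a connected $(d_1,d_2)$-biregular graph $\lambda_1=\sqrt{d_1d_2}$, so $x(x-d_1-d_2)=(x-d_1)(x-d_2)-\lambda_1^2$ is precisely the $i=1$ factor, which is why the product in the formula runs from $i=2$. Second, one should check that none of the terms $d_1d_2-\lambda_i^2$ with $i\ge2$ vanishes; but a vanishing term would give $0$ as an eigenvalue of $L$ of multiplicity at least two, contradicting connectedness of $G$. The whole argument is thus a short corollary of Theorem~\ref{T-signlessPolynomial}, with no real difficulty to overcome.
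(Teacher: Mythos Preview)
Your proposal is correct and follows essentially the same route as the paper: use that $\mathrm{spec}(L)=\mathrm{spec}(Q)$ for bipartite $G$, invoke Theorem~\ref{T-signlessPolynomial} to read off the eigenvalues of $L$, drop the single zero eigenvalue, and apply the Matrix-Tree theorem. The paper compresses your Vieta computation into the one-liner ``evaluate $Q_G(x)/\bigl((n_1+n_2)x\bigr)$ at $x=0$,'' which is the same calculation packaged differently; your additional remarks on why $0$ is simple and why the product starts at $i=2$ are helpful but not points of divergence.
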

	\begin{proof}
		By the Matrix-Tree theorem, the number of spanning trees of $G$ will be equal to the product of the $n-1$ largest eigenvalues of $L$ divided by $n_1+n_2$.  Since $G$ is bipartite, this is equivalent to taking the product of the eigenvalues of $Q$ after ignoring a 0 eigenvalue and dividing by $n_1+n_2$, and this will simply be $\frac{Q_G(x)}{(n_1+n_2)x}$ evaluated at $x=0$. By using this and Theorem~\ref{T-signlessPolynomial}, one arrives at the desired result.
	\end{proof}
	
	Let $C_{n}$ denote the $n$-cube, i.e. the graph whose vertices are $n$-length bit strings and two strings are adjacent if their hamming distance is 1.  Define $C_{n,k}$ to be the subgraph of $C_n$ induced by all vertices of $C_n$ that have either $k-1$ or $k$ 1's. 
	
	\begin{thm}
		The number of spanning trees of $C_{n,k}$ when $k\le n/2$ is
		
		\[
		\frac{(n+1)k^{{n\choose k}-{n\choose k-1}}\prod_{i=1}^{k-1}((k-i)(i+n-k+1))^{{n\choose k-i}-{n\choose k-i-1}}}{{n\choose k}+{n\choose k-1}}. 
		\]
	\end{thm}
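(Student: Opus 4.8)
The plan is to apply Theorem~\ref{T-application}, so the work splits into identifying the parameters of $C_{n,k}$ and computing its adjacency spectrum. First I would observe that $C_{n,k}$ is connected and $(d_1,d_2)$-biregular: its two bipartition classes are the set $V_1$ of bit strings with exactly $k$ ones and the set $V_2$ of strings with exactly $k-1$ ones. A vertex of $V_1$ has a neighbor in $C_{n,k}$ for each of its $k$ ones (flip one to $0$), and a vertex of $V_2$ has one for each of its $n-k+1$ zeros (flip one to $1$). Since $k\le n/2$ we have $\binom{n}{k}\ge\binom{n}{k-1}$, so in the notation of Theorem~\ref{T-application} we take $n_1=\binom{n}{k}$, $n_2=\binom{n}{k-1}$, $d_1=k$, $d_2=n-k+1$; note $d_1+d_2=n+1$ and $d_1d_2=k(n-k+1)$, which already explains the factors $(n+1)$, $k^{n_1-n_2}$, and $\binom{n}{k}+\binom{n}{k-1}$ in the claimed formula.

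Second, I would compute the eigenvalues of $A=A(C_{n,k})$. In block form $A$ is determined by the inclusion matrix $M$ between $(k-1)$-subsets and $k$-subsets of $[n]$, and the nonzero eigenvalues of $A$ are exactly the numbers $\pm\sqrt{\mu}$ as $\mu$ ranges over the nonzero eigenvalues of $MM^{T}$. Since $(MM^{T})_{S,S'}$ counts the $k$-sets containing $S\cup S'$, one has $MM^{T}=(n-k+1)I+A(J(n,k-1))$, the adjacency matrix of a Johnson graph up to an affine shift; alternatively one may invoke the $\mathfrak{sl}_2$-module (equivalently symmetric-group) decomposition of the Boolean lattice. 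Either way, $MM^{T}$ has eigenvalue $(k-j)(n-k+1-j)$ with multiplicity $\binom{n}{j}-\binom{n}{j-1}$ for $j=0,1,\dots,k-1$, and these exhaust $V_2$ because $\sum_{j=0}^{k-1}\bigl(\binom{n}{j}-\binom{n}{j-1}\bigr)=\binom{n}{k-1}$. Hence the $n_2$ largest eigenvalues of $A$ (which, since $A$ has exactly $n_2$ positive and $n_2$ negative eigenvalues, are precisely its positive ones) are $\sqrt{(k-j)(n-k+1-j)}$ for $j=0,\dots,k-1$ with multiplicity $\binom{n}{j}-\binom{n}{j-1}$; in particular the top one is $\lambda_1=\sqrt{d_1d_2}$, occurring once, as it must for a connected biregular graph.

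Finally I would substitute into Theorem~\ref{T-application}. The key simplification is that, because $d_1+d_2=n+1$,
\[
d_1d_2-(k-j)(n-k+1-j)=k(n-k+1)-(k-j)(n-k+1-j)=j(n+1-j),
\]
so that $\prod_{i=2}^{n_2}\bigl(d_1d_2-\lambda_i^2\bigr)=\prod_{j=1}^{k-1}\bigl(j(n+1-j)\bigr)^{\binom{n}{j}-\binom{n}{j-1}}$, the factor $i=1$ being dropped exactly because it is the $j=0$ term (which vanishes). Reindexing the product by $j\mapsto k-j$ rewrites it as $\prod_{i=1}^{k-1}\bigl((k-i)(n-k+1+i)\bigr)^{\binom{n}{k-i}-\binom{n}{k-i-1}}$, and multiplying by the prefactor $(d_1+d_2)d_1^{n_1-n_2}/(n_1+n_2)$ yields precisely the stated expression.

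The main obstacle is the spectral computation in the second step: obtaining the eigenvalues $(k-j)(n-k+1-j)$ with the correct multiplicities $\binom{n}{j}-\binom{n}{j-1}$ is where essentially all the content lies, and it relies either on the known spectrum of the Johnson scheme or on the $\mathfrak{sl}_2$ structure of the Boolean lattice (injectivity of the up-maps below the middle level being what pins down both the multiplicities and the fact that $MM^{T}$ is nonsingular, so that $\dim\Eig{A}{0}=n_1-n_2$). The remaining steps are routine manipulations of binomial coefficients.
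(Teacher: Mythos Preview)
Your proof is correct and follows essentially the same route as the paper: identify the biregular parameters of $C_{n,k}$, invoke the known adjacency spectrum, and substitute into Theorem~\ref{T-application}. The paper simply cites Stanley for the eigenvalue step (stating $\lambda_i^2=i(n-2k+i+1)$ with multiplicity $\binom{n}{k-i}-\binom{n}{k-i-1}$, which is your $(k-j)(n-k+1-j)$ under $i=k-j$), and its algebraic simplification $k(n-k+1)-i(n-2k+i+1)=(k-i)(i+n-k+1)$ is exactly your identity after that same reindexing.
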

	\begin{proof}
		From the definition of $C_{n,k}$ it is clear that this graph is $(k,n-k+1)$-biregular with $|V_1|={n\choose k},\ |V_2|={n\choose k-1}$, and we have $|V_1|\ge |V_2|$ since $k\le n/2$. It was proven in Theorem~2.12 of \cite{stanleyPaper} that the squares of the $|V_2|$ largest eigenvalues of the adjacency matrix of $C_{n,k}$ are $i(n-2k+i+1)$ for $1\le i\le k$, each having multiplicity ${n\choose k-i}-{n\choose k-i-1}$.  The result follows after  applying Theorem~\ref{T-application} and observing that \[k(n-k+1)-i(n-2k+i+1)=(k-i)(i+n-k+1).\]
	\end{proof}
	
	More generally, let $C_n(q)$ be the lattice of subspaces of an $n$-dimensional vector space over the finite field $\F_q$.  Let $C_{n,k}(q)$ denote the graph whose vertices are the elements of $C_n(q)$ of dimensions $k$ and $k-1$ with two vertices being adjacent if one is a subspace of the other (thus this is the Hasse graph of $C_n(q)$ induced by the elements of rank $k$ and $k-1$).  Let
	\begin{align*}
	\q{n}&=1+q+\cdots+q^{n-1}=\frac{q^n-1}{q-1},\\  \qchoose{n}{k}&=\frac{(q^n-1)(q^{n-1}-1)\cdots(q^{n-k+1}-1)}{(q^k-1)(q^{k-1}-1)\cdots(q-1)}.
	\end{align*}
	
	\begin{thm}
		The number of spanning trees of $C_{n,k}(q)$ when $k\le n/2$ is
		
		\[
		\frac{(\q{k}+\q{n-k+1})(\q{k})^{\qchoose{n}{k}-\qchoose{n}{k-1}}\prod_{i=1}^{k-1}(\q{k}\q{n-k+1}-\gamma_i)^{\qchoose{n}{k-i}-\qchoose{n}{k-i-1}}}{\qchoose{n}{k}+\qchoose{n}{k-1}}, 
		\]
		where $\gamma_i=\q{i}(q^{k-i}\q{n-2k}+q^{n-k-i}\q{i+1})$.
	\end{thm}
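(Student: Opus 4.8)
The plan is to imitate the proof of the previous theorem. First I would record the biregularity data of $C_{n,k}(q)$: its vertex set splits as $V_1\cup V_2$, where $V_1$ consists of the $k$-dimensional subspaces, so that $|V_1|=\qchoose{n}{k}$, and $V_2$ of the $(k-1)$-dimensional subspaces, so that $|V_2|=\qchoose{n}{k-1}$. A $k$-dimensional subspace contains exactly $\qchoose{k}{1}=\q{k}$ subspaces of dimension $k-1$, and a $(k-1)$-dimensional subspace $U$ is contained in exactly $\qchoose{n-k+1}{1}=\q{n-k+1}$ subspaces of dimension $k$ (one for each line of $\F_q^n/U$). Hence $C_{n,k}(q)$ is $(\q{k},\q{n-k+1})$-biregular; since $k\le n/2$ we have $\q{n-k+1}\ge\q{k}$ and therefore $\qchoose{n}{k}\ge\qchoose{n}{k-1}$, so Theorem~\ref{T-application} applies with $d_1=\q{k}$, $d_2=\q{n-k+1}$, $n_1=\qchoose{n}{k}$, and $n_2=\qchoose{n}{k-1}$.

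Next I would determine the squares of the $n_2$ largest eigenvalues of the adjacency matrix $A$. For $k$-dimensional subspaces $W,W'$, the $(W,W')$-entry of $A^2$ is the number of $(k-1)$-dimensional subspaces of $W\cap W'$, which is $\q{k}$ when $W=W'$, is $1$ when $\dim(W\cap W')=k-1$, and is $0$ otherwise; thus the principal submatrix of $A^2$ on the rows and columns indexed by $V_1$ is $\q{k}I$ plus the adjacency matrix of the Grassmann graph $J_q(n,k)$. The eigenvalues of $J_q(n,k)$ are classical (see, e.g., \cite{brouwer}): they are $\theta_j=q^{j+1}\q{k-j}\q{n-k-j}-\q{j}$ for $0\le j\le k$, each of multiplicity $\qchoose{n}{j}-\qchoose{n}{j-1}$, strictly decreasing in $j$ with $\theta_k=-\q{k}$. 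Since $G$ is bipartite with $|V_1|\ge|V_2|$, the nonzero squares of eigenvalues of $A$ are precisely the numbers $\q{k}+\theta_j$ for $0\le j\le k-1$; writing $i=k-j$, these are $\gamma_i:=\q{k}+\theta_{k-i}=\q{k}-\q{k-i}+q^{k-i+1}\q{i}\q{n-2k+i}$ for $1\le i\le k$, of multiplicity $\qchoose{n}{k-i}-\qchoose{n}{k-i-1}$, the largest being $\gamma_k=\q{k}+\theta_0=\q{k}\q{n-k+1}=d_1d_2$, of multiplicity one. A routine $q$-binomial manipulation, repeatedly applying $\q{a+b}=\q{a}+q^{a}\q{b}$, should then show that $\gamma_i$ equals the expression $\q{i}(q^{k-i}\q{n-2k}+q^{n-k-i}\q{i+1})$ of the statement.

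Finally I would feed this into Theorem~\ref{T-application}, whose formula for the number of spanning trees is $\frac{(d_1+d_2)\,d_1^{\,n_1-n_2}\prod_{i=2}^{n_2}(d_1d_2-\lambda_i^2)}{n_1+n_2}$, where $\lambda_1^2\ge\cdots\ge\lambda_{n_2}^2$ are the $n_2$ largest squared eigenvalues of $A$. Here $\lambda_1^2=\gamma_k=d_1d_2$, so that factor drops out of the product, and the remaining squared eigenvalues are $\gamma_1,\dots,\gamma_{k-1}$ with the multiplicities above, which sum to $n_2-1$; hence $\prod_{i=2}^{n_2}(d_1d_2-\lambda_i^2)=\prod_{i=1}^{k-1}(\q{k}\q{n-k+1}-\gamma_i)^{\qchoose{n}{k-i}-\qchoose{n}{k-i-1}}$. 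Substituting this together with $d_1+d_2=\q{k}+\q{n-k+1}$, $d_1^{\,n_1-n_2}=(\q{k})^{\qchoose{n}{k}-\qchoose{n}{k-1}}$, and $n_1+n_2=\qchoose{n}{k}+\qchoose{n}{k-1}$ into the formula of Theorem~\ref{T-application} yields precisely the claimed count. The one genuinely nontrivial point is the middle step: having the spectrum of $C_{n,k}(q)$ (equivalently, of the Grassmann graph) in closed form, and performing the $q$-identity that rewrites $\q{k}+\theta_{k-i}$ in the compact form $\gamma_i$; everything else is bookkeeping identical in spirit to the preceding theorem.
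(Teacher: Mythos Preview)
Your proposal is correct and follows essentially the same architecture as the paper: record the biregularity data of $C_{n,k}(q)$, identify the squares of the top $n_2$ eigenvalues of $A$ together with their multiplicities, rewrite them in the closed form $\gamma_i$, and substitute into Theorem~\ref{T-application}.

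The one substantive difference is in how you obtain the squared eigenvalues. The paper simply quotes Theorem~2.12 of \cite{stanleyPaper}, which gives them as the partial sums $\sum_{s=1}^{i} r_{k-s}$ with $r_j=q^{j}\q{n-2j}$, and then spends most of the proof collapsing that sum into the form $\gamma_i$. You instead observe that the $V_1$-block of $A^2$ equals $\q{k}I$ plus the adjacency matrix of the Grassmann graph $J_q(n,k)$, import the classical eigenvalues $\theta_j$ of $J_q(n,k)$, and set $\gamma_i=\q{k}+\theta_{k-i}$. Both routes arrive at the same numbers (indeed both equal $q^{k-i}\q{i}\q{n-2k+i+1}$), and both defer the nontrivial spectral computation to the literature; your route has the advantage that the $q$-simplification from $\q{k}+\theta_{k-i}$ to the stated $\gamma_i$ is shorter than the paper's telescoping of the $r_{k-s}$ sum, at the cost of needing the extra observation about the block structure of $A^2$. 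Either way, the remaining bookkeeping to reach the final formula is identical.
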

	\begin{proof}
		$C_{n,k}(q)$ is $(\q{k},\q{n-k+1})$-biregular with $|V_1|=\qchoose{n}{k},\ |V_2|=\qchoose{n}{k-1}$, and  $|V_1|\ge |V_2|$.  It was proven in Theorem~2.12 of \cite{stanleyPaper} that the squares of the $|V_2|$ largest eigenvalues of the adjacency matrix of $C_{n,k}(q)$ are, for $1\le i\le k$, \begin{align*}r_{k-1}+r_{k-2}+\cdots+r_{k-i},\textrm{ with}\\  r_i=\q{n-i}-\q{i}=q^i\q{n-2i},\end{align*} each  with multiplicity $\qchoose{n}{k-i}-\qchoose{n}{k-i-1}$.  We wish to put these expressions into a closed form.
		
		We have 
		\[\sum_{s=1}^i r_{k-s}=\sum_{s=1}^i q^{k-s}\q{n+2s-2k}=\sum_{s=1}^{i}q^{k-s}(\q{n-2k}+q^{n-2k}\q{2s}),\] 
		so it will be sufficient to find closed forms for the sums $\sum_{s=1}^{i}q^{k-s}\q{n-2k}$ and $q^{n-k}\sum_{s=1}^iq^{-s}\q{2s}$.  The first sum can be written as \begin{align*}
		\q{n-2k}\sum_{s=1}^iq^{k-s}&=\q{n-2k}\sum_{s=1}^i q^{k-i+s-1}=q^{k-i}\q{n-2k}\sum_{s=1}^iq^{s-1}\\ &=q^{k-i}\q{n-2k}\q{i}.
		\end{align*}
		
		For the second sum,
		\begin{align*}
		&q^{n-k}\sum_{s=1}^iq^{-s}\q{2s}=q^{n-k}\sum_{s=1}^i q^{-s}\frac{q^{2s}-1}{q-1}=\frac{q^{n-k}}{q-1}\sum_{s=1}^i q^s-q^{-s} \\&=\frac{q^{n-k}}{q-1}\left(\frac{q(q^i-1)}{q-1}-\frac{q^{-i}(q^i-1)}{q-1}\right) =\frac{q^{n-k-i}(q^{i+1}-1)(q^{i}-1)}{(q-1)^2}\\ &=q^{n-k-i}\q{i+1}\q{i}.
		\end{align*}
		Thus in total the squares of the eigenvalues are of the form \[q^{k-i}\q{n-2k}\q{i}+q^{n-k-i}\q{i+1}\q{i}=\q{i}(q^{k-i}\q{n-2k}+q^{n-k-i}\q{i+1})=\gamma_i,\]  and plugging this into Theorem~\ref{T-application} gives the desired result.
		
	\end{proof}

	\section{Relations Involving $A,\ Q,$ and $L$}\label{S-Q}
	When $G$ is biregular, we proved that there exists a relation of the form $A^r=f(Q)$ that allows us to  translate between eigenvalues of $A$ and eigenvalues of $Q$, and if $G$ is $d$-regular, the relation $A=Q-dI$ gives an analogous result.  One might hope that there exists some notion of ``tripartite'' graphs for which a similar result holds.  However, it turns out that the only graphs that can satisfy $A^r=f(Q)$ are the regular and biregular graphs.
	
	The general idea in proving that $X^r=f(Y)$ implies that the underlying graph $G$ has a certain property $P$ is as follows.  We first show that if $X$ and $Y$ share a certain eigenvector $v$, then $G$ must have property $P$.  We then use the following three lemmas to show that if $X^r=f(Y)$, then $X$ and $Y$ both have $v$ as an eigenvector.
	
	We note that $Q,\ L,$ and $\NL$ have nonnegative spectrum (see \cite{chungButler}, for example), and that $A$'s spectrum is real, so $A^2$ has nonnegative spectrum.
	
	\begin{lem}\label{L-same}
		Let $X$ be a diagonalizable matrix with nonnegative spectrum (such as $A^2,\ L,\ Q,$ or $\NL$).  Assume that $X^r=f(Y)$ for some matrix $Y$.  If $v$ is an eigenvector of $Y$, then $v$ is an eigenvector of $X$.
	\end{lem}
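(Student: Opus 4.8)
The plan is to exploit the fact that $X$ is diagonalizable, so that the identity $X^r = f(Y)$ can be tested against each eigenspace of $X$ individually, together with the observation that nonnegativity of the spectrum makes the $r$-th power map injective on the values that arise. First I would record the immediate consequence of the hypothesis: if $Yv = \mu v$, then applying the polynomial $f$ to $Y$ gives $X^r v = f(Y)v = f(\mu)v$, so $v$ is an eigenvector of $X^r$ with eigenvalue $f(\mu)$.

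Next, since $X$ is diagonalizable, the ambient space $\Rn$ (or $\Cn$) is the direct sum of the eigenspaces $\Eig{X}{\lambda_i}$, and I can write $v = \sum_i v_i$ with $v_i \in \Eig{X}{\lambda_i}$. Applying $X^r$ term by term yields $\sum_i \lambda_i^r v_i = X^r v = f(\mu)\sum_i v_i$; comparing components across the direct sum forces $\lambda_i^r = f(\mu)$ for every index $i$ with $v_i \neq 0$. In particular $f(\mu) = \lambda_i^r \ge 0$.

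The last step is where nonnegativity of the spectrum enters: every eigenvalue $\lambda_i$ of $X$ satisfies $\lambda_i \ge 0$, and $t \mapsto t^r$ is strictly increasing, hence injective, on $[0,\infty)$. So from $\lambda_i^r = \lambda_j^r$ for all $i,j$ occurring in the expansion of $v$ we conclude $\lambda_i = \lambda_j$; that is, $v$ lies entirely in a single eigenspace $\Eig{X}{\lambda}$, where $\lambda$ is the unique nonnegative $r$-th root of $f(\mu)$. Thus $v$ is an eigenvector of $X$, as claimed.

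I do not expect a genuine obstacle here; the one point worth stating carefully is why nonnegativity of the spectrum of $X$ cannot be dropped — for instance with $r=2$, a vector $v$ split between the $\lambda$- and $(-\lambda)$-eigenspaces of $X$ is an eigenvector of $X^2$ but need not be one of $X$ — and that the parenthetical remark ($A^2,\ L,\ Q,\ \NL$ all have nonnegative spectrum) is exactly what licenses applying the lemma in the cases of interest.
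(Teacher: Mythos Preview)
Your proof is correct and follows essentially the same route as the paper's: both observe that $v$ is an eigenvector of $X^r=f(Y)$, and then use nonnegativity of the spectrum to conclude that the $r$-th power map is injective on eigenvalues, so eigenvectors of $X^r$ are eigenvectors of $X$. The only cosmetic difference is that the paper states this last step as the eigenspace identity $\Eig{X^r}{\mu}=\Eig{X}{\mu^{1/r}}$, whereas you decompose $v$ along the eigenspaces of $X$ and compare components directly; the content is identical.
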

	\begin{proof}
		If $V'=\{v_1,\ldots,v_n\}$ is a basis of eigenvectors of $X$ with $Xv_i=\mu_iv_i$, then $X^r v_i=\mu_i^r v_i$ for all $i$, so $V'$ will also be a basis of eigenvectors of $X^r$.  It follows that $\Eig{X^r}{\mu}=\bigoplus_{\mu_i^r=\mu} \Eig{X}{\mu_i}$ for all eigenvalues $\mu$ of $X^r$.  As $\mu_i\ge 0$ for all $i$ by assumption, we must have $\Eig{X^r}{\mu}=\Eig{X}{\mu^{1/r}}$ for all eigenvalues of $X^r$.  Thus any eigenvector of $X^r$ is also an eigenvector of $X$.  But if $v$ is an eigenvector of $Y$ with eignevalue $\lambda$, then $X^rv=f(Y)v=f(\lambda)v$.  Thus $v$ is an eigenvector of $X^r$, and hence of $X$.
	\end{proof}
	
	\begin{lem}\label{L-onedim}
		Let $X,\ Y$ be diagonalizable matrices such that $X^r=f(Y)$, and assume that there exists a $\mu$ such that $\Eig{X}{\mu}=\Eig{X^r}{\mu^r}$ with $\dim \Eig{X}{\mu}=1$.  If $v\in \Eig{X}{\mu}$, then $v$ is an eigenvector of $Y$.
	\end{lem}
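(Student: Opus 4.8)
The plan is to exploit the diagonalizability of $Y$ to split $v$ into a sum of $Y$-eigenvectors, and then use the one-dimensionality hypothesis to force that sum to collapse to a single term. Note that, unlike in Lemma~\ref{L-same}, no hypothesis on the sign of the spectrum of $X$ is needed here.

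First I would record the easy direction: we may assume $v\ne 0$ (eigenvectors being nonzero by convention), and since $Xv=\mu v$ we get $X^r v=\mu^r v$, hence $f(Y)v=X^r v=\mu^r v$ because $X^r=f(Y)$. Now, because $Y$ is diagonalizable, write
\[
v=v_1+\cdots+v_k,
\]
where each $v_i\ne 0$, $Yv_i=\lambda_i v_i$, and $\lambda_1,\ldots,\lambda_k$ are pairwise distinct; the $v_i$ are then linearly independent.

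Next I would apply $f(Y)$ term by term, obtaining $f(Y)v=\sum_{i=1}^k f(\lambda_i)v_i$, which must equal $\mu^r v=\sum_{i=1}^k \mu^r v_i$. Linear independence of the $v_i$ then forces $f(\lambda_i)=\mu^r$ for every $i$, and therefore $X^r v_i=f(Y)v_i=f(\lambda_i)v_i=\mu^r v_i$, so each $v_i$ lies in $\Eig{X^r}{\mu^r}$, which by hypothesis equals $\Eig{X}{\mu}$.

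Finally, since $\dim\Eig{X}{\mu}=1$ and the nonzero vector $v$ lies in $\Eig{X}{\mu}$, this eigenspace is spanned by $v$, so each $v_i=c_i v$ with $c_i\ne 0$. Substituting into $Yv_i=\lambda_i v_i$ and cancelling $c_i$ gives $Yv=\lambda_i v$ for every $i$; if $k\ge 2$ this would force $\lambda_1=\lambda_2$, contradicting distinctness. Hence $k=1$, so $v=v_1$ is an eigenvector of $Y$ (with eigenvalue $\lambda_1$), as claimed. The only real point to be careful about is this last collapsing step — making sure the distinctness of the $\lambda_i$ is used correctly so that the decomposition of $v$ has a single nonzero summand.
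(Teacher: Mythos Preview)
Your proof is correct and follows essentially the same approach as the paper's: both use the diagonalizability of $Y$ to produce $Y$-eigenvectors that are automatically $X^r$-eigenvectors via $X^r=f(Y)$, and then invoke the one-dimensionality of $\Eig{X}{\mu}=\Eig{X^r}{\mu^r}$ to conclude. The only cosmetic difference is that the paper picks a full basis of $Y$-eigenvectors and notes that one of them must land in the one-dimensional space $\Eig{X}{\mu}$, whereas you decompose $v$ itself into its $Y$-eigencomponents and collapse the sum; the underlying idea is identical.
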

	\begin{proof}
		Let $V'=\{v_1,\ldots,v_n\}$ be a basis of eigenvectors of $Y$.  This will also be a basis of eigenvectors of $X^r$, so there exists a vector $v_i\in V'$ such that $v_i\in \Eig{X^r}{\mu^r}= \Eig{X}{\mu}$.  Since $\dim \Eig{X}{\mu}=1$, we conclude that $v_i$ is a scaler multiple of $v$, and hence $v$ is also an eigenvector of $Y$.
	\end{proof}
	
	One can strengthen the previous lemma if both matrices have nonnegative spectrum.
	\begin{lem}\label{L-bothNonneg}
		Let $X,\ Y$ be diagonalizable matrices with nonnegative spectrum and assume that there exists a $\mu$ such that $\dim \Eig{X}{\mu}=1$ with $v\in \Eig{X}{\mu}$. If either $X^r=f(Y)$ or $Y^r=f(X)$, then $v$ will be an eigenvector of $Y$.
	\end{lem}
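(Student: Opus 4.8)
The plan is to split into the two cases according to which of the two relations holds, and in each case reduce to one of the preceding two lemmas; the asymmetry in the statement will turn out to be only apparent.

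First suppose $Y^r=f(X)$. Since $Y$ is diagonalizable with nonnegative spectrum, Lemma~\ref{L-same} (applied with the roles of $X$ and $Y$ interchanged) shows that every eigenvector of $X$ is an eigenvector of $Y$. As $v\in\Eig{X}{\mu}$ is by hypothesis an eigenvector of $X$, it is an eigenvector of $Y$, and we are done in this case — note that the one-dimensionality assumption is not even needed here.

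Now suppose $X^r=f(Y)$. Here the point is to verify the hypothesis $\Eig{X}{\mu}=\Eig{X^r}{\mu^r}$ of Lemma~\ref{L-onedim}. Fix a basis $v_1,\dots,v_n$ of eigenvectors of $X$ with $Xv_i=\mu_i v_i$; then $X^r v_i=\mu_i^r v_i$, so this is also an eigenbasis for $X^r$ and $\Eig{X^r}{\lambda}=\bigoplus_{\mu_i^r=\lambda}\Eig{X}{\mu_i}$ for every eigenvalue $\lambda$ of $X^r$. Because $X$ has nonnegative spectrum, the map $t\mapsto t^r$ is injective on the set of eigenvalues of $X$, so the only $\mu_i$ with $\mu_i^r=\mu^r$ is $\mu_i=\mu$; hence $\Eig{X^r}{\mu^r}=\Eig{X}{\mu}$, which in particular has dimension $1$. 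Lemma~\ref{L-onedim} now applies and gives that $v$ is an eigenvector of $Y$.

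I do not expect a serious obstacle: the only subtlety is that nonnegativity of the spectrum of $X$ is exactly what makes $t\mapsto t^r$ injective on $X$'s eigenvalues (if negative eigenvalues were allowed and $r$ were even, $\mu$ and $-\mu$ could collide and $\Eig{X^r}{\mu^r}$ could properly contain $\Eig{X}{\mu}$). One could also present the argument uniformly by observing that in either case the matrix being raised to the $r$th power is diagonalizable with nonnegative spectrum, so Lemma~\ref{L-same} always applies to it, and then the extra work in the second case is just the passage from ``eigenvector of $X$'' back to ``eigenvector of $Y$'' via the one-dimensional eigenspace.
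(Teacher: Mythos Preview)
Your proof is correct and follows exactly the paper's approach: the case $Y^r=f(X)$ is handled by Lemma~\ref{L-same} (using that $Y$ has nonnegative spectrum), and the case $X^r=f(Y)$ is handled by Lemma~\ref{L-onedim} after observing that nonnegativity of $X$'s spectrum forces $\Eig{X^r}{\mu^r}=\Eig{X}{\mu}$. Your additional remarks on why nonnegativity is needed and on the irrelevance of one-dimensionality in the first case are accurate elaborations of the same argument.
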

	\begin{proof}
		The case $X^r=f(Y)$ follows from Lemma~\ref{L-onedim} after one notes that $\Eig{X^r}{\mu^r}=\Eig{X}{\mu}$ because the spectrum of $X$ is nonnegative.  The case $Y^r=f(X)$ follows from Lemma~\ref{L-same} because $Y$ has nonnegative spectrum.
	\end{proof}

	We recall the Perron-Frobenius theorem.
	
	\begin{thm}[Perron-Frobenius]\label{T-pf}
		Let $M$ be an irreducible matrix with nonnegative entries.  If $\Lambda$ is the largest eigenvalue of $M$, then it has multiplicity one and there exists an eigenvector $\pf$ with $M\pf=\Lambda \pf$ such that every entry of $\pf$ is positive.
	\end{thm}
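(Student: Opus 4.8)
The plan is to prove the statement in three stages: produce a nonnegative eigenvector of $M$ whose eigenvalue turns out to be the spectral radius, promote it to a strictly positive one using irreducibility, and then show that this eigenvalue has multiplicity one. The case $n=1$ is immediate, so assume $n\ge 2$. For the first stage, observe that since $M$ is irreducible we have $Mx\ne 0$ for every nonzero $x\ge 0$ (otherwise the support of $x$ would have no out-edges in the digraph of $M$). Hence $x\mapsto Mx/\norm{Mx}_1$ is a well-defined continuous self-map of the simplex $\Delta=\{x\ge 0:\sum_i x_i=1\}$, and Brouwer's fixed point theorem produces $\pf_0\in\Delta$ with $M\pf_0=\lambda\pf_0$ and $\lambda=\norm{M\pf_0}_1>0$; thus $M$ has a nonzero eigenvector $\pf_0\ge 0$ with eigenvalue $\lambda>0$.

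For the second stage I would use the standard fact that an irreducible nonnegative matrix has $(I+M)^{n-1}$ entrywise positive (for $i\ne j$ there is a walk of length at most $n-1$ from $i$ to $j$, and the diagonal is positive because of the $I$). Then $(1+\lambda)^{n-1}\pf_0=(I+M)^{n-1}\pf_0>0$, so in fact $\pf_0>0$; write $\pf$ for this positive eigenvector. To see that $\lambda$ is the spectral radius, let $Mw=\mu w$ with $w\ne 0$ (possibly complex); then entrywise $|\mu|\,|w|=|Mw|\le M|w|$. Running the same argument on $M^T$, which is also irreducible and nonnegative, yields a strictly positive $u$ with $M^Tu=\lambda' u$, and pairing $M\pf=\lambda\pf$ against $u$ forces $\lambda'=\lambda$ since $u^T\pf>0$. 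Then $\lambda\,u^T|w|=u^TM|w|\ge|\mu|\,u^T|w|$ with $u^T|w|>0$ gives $|\mu|\le\lambda$. Hence $\lambda=\Lambda$ is a real positive eigenvalue dominating all others in modulus, in particular the largest eigenvalue of $M$.

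For the multiplicity claim, let $Mw=\Lambda w$ with $w\ne 0$; passing to real and imaginary parts we may take $w$ real. For $t$ near $0$ the vector $\pf-tw$ is strictly positive; let $t_0$ be the largest $t$ for which $\pf-tw\ge 0$. Then $\pf-t_0w\ge 0$ has a zero coordinate, but if it were nonzero it would be a nonnegative eigenvector for $\Lambda$, hence strictly positive by the argument of the second stage — a contradiction. Therefore $\pf-t_0w=0$, so $w$ is a scalar multiple of $\pf$ and $\Eig{M}{\Lambda}$ is one-dimensional. (If algebraic multiplicity one is also wanted, a Jordan block of size at least $2$ at $\Lambda$ would give a vector $z$ with $(M-\Lambda I)z=\pf>0$, and pairing against the left Perron vector $u>0$ yields $0=u^T(M-\Lambda I)z=u^T\pf>0$, impossible.)

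The step I expect to be the main obstacle is making the second stage fully rigorous: pinning down exactly where irreducibility enters to force the nonnegative eigenvector to be strictly positive, and carrying out the Collatz--Wielandt-type comparison that identifies $\lambda$ with the spectral radius. The first stage is clean once one notes that irreducibility rules out $Mx=0$ on $\Delta$, and the multiplicity claim is a short squeezing argument on $\pf-tw$.
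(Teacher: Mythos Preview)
The paper does not prove the Perron--Frobenius theorem; it is merely recalled (``We recall the Perron--Frobenius theorem'') and then invoked as a black box, so there is no proof in the paper to compare against.

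That said, your argument is a standard and essentially correct proof of the result. Two minor points are worth tightening. First, in Stage~1 the parenthetical justification for $Mx\ne 0$ is slightly off: from $Mx=0$ with $x\ge 0$ supported on $S$ one deduces that every column of $M$ indexed by $S$ vanishes, so no vertex has an edge \emph{into} $S$; this already contradicts irreducibility, but it is the complement $S^c$ (when nonempty) that has no out-edges to $S$, not $S$ itself. Second, in the multiplicity argument the existence of a \emph{largest} $t_0$ with $\pf-t_0 w\ge 0$ tacitly assumes some coordinate of $w$ is positive; if $w\le 0$ one should first replace $w$ by $-w$ (or equivalently consider $\pf+tw$). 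With these cosmetic fixes the three stages --- Brouwer on the simplex, positivity via $(I+M)^{n-1}>0$ together with a Collatz--Wielandt comparison through a positive left eigenvector, and the squeezing argument on $\pf-tw$ --- go through cleanly and give both geometric and algebraic simplicity of $\Lambda$.
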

	If $G$ is connected (which we always assume to be the case), then Theorem~\ref{T-pf} applies to $A$ and $Q$.  
	
	It turns out that the key lemmas needed to prove necessary conditions for $A^r=f(Q)$ are the same lemmas needed to prove necessary conditions for $X^r=f(Y)$ when $X$ and $Y$ are \textit{any} two matrices of $A,\ Q,$ and $L$, so we shall generalize our notation to deal with all of these cases at the same time.
	
	To this end, we will say that $(N,P)$ is a \textit{Laplacian pair} if $N$ is a nonnegative irreducible diagonalizable matrix, $P$ is a diagonalizable matrix with nonnegative spectrum, and $N+aP=bD$ for some $a,b\in \R\setminus\{0\}$.  We note that $(A,Q),\ (A,L)$ and $(Q,L)$ are all Laplacian pairs, since we have $A-Q=-D,\ A+L=D,\ Q+L=2D$, and the other conditions are all clearly satisfied.
	
	Given a Laplacian pair $(N,P)$, we will let $\Lambda$ refer to the largest eigenvalue of $N$ and $\pf$ will refer to its corresponding positive eigenvector as is guaranteed by the Perron-Frobenius theorem.
	
	\begin{lem}\label{L-HW}
		Let $(N,P)$ be a Laplacian pair.  If $\pf$ is also an eigenvector of $P$, then $G$ is regular.
	\end{lem}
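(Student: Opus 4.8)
The plan is to exploit the linear relation $N+aP=bD$ that is built into the definition of a Laplacian pair. Suppose $\pf$ is a common eigenvector of $N$ and $P$; since by convention $\pf$ is the Perron--Frobenius eigenvector of $N$ we have $N\pf=\Lambda\pf$ with $\Lambda$ the largest eigenvalue of $N$, and say $P\pf=\mu\pf$ for some scalar $\mu$. First I would apply the operator identity $N+aP=bD$ to the vector $\pf$, obtaining $bD\pf=(N+aP)\pf=(\Lambda+a\mu)\pf$. Since $b\ne 0$, this says precisely that $\pf$ is an eigenvector of $D$ with eigenvalue $(\Lambda+a\mu)/b$.

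Next I would read this off coordinatewise. Because $D$ is the diagonal matrix of vertex degrees, the $u$th coordinate of $D\pf$ equals $d_u\pf_u$, so $d_u\pf_u=\frac{\Lambda+a\mu}{b}\pf_u$ for every vertex $u$. The one point that actually needs the hypotheses is that each entry $\pf_u$ is strictly positive — this is exactly what the Perron--Frobenius theorem (Theorem~\ref{T-pf}) guarantees, using that $N$ is nonnegative and irreducible (irreducibility coming from the connectedness of $G$). Dividing through by $\pf_u\ne 0$ yields $d_u=\frac{\Lambda+a\mu}{b}$ for all $u$; hence every vertex has the same degree and $G$ is regular.

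There is essentially no obstacle here: the lemma is a short bookkeeping step whose entire force comes from the positivity of the Perron vector, which is what upgrades ``$\pf$ is an eigenvector of the diagonal matrix $D$'' into ``$D$ is a scalar matrix.'' In later sections this will be applied with $(N,P)$ taken to be $(A,Q)$, $(A,L)$, or $(Q,L)$, via the identities $A-Q=-D$, $A+L=D$, and $Q+L=2D$ recorded above.
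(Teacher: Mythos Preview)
Your proof is correct and follows exactly the same approach as the paper: apply the identity $N+aP=bD$ to $\pf$ to deduce that $\pf$ is an eigenvector of $D$, then use the positivity of the Perron vector to conclude that all degrees are equal. The only difference is that you spell out the coordinatewise division by $\pf_u$ explicitly, whereas the paper states the same conclusion in words.
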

	\begin{proof}
		Assume that $P\pf=\mu \pf $ for some $\mu$.  Then $bD\pf=(aP+N)\pf=(a\mu+ \Lambda)\pf$, so $\pf$ is also an eigenvector of $D$.  But the only way for $\pf$ to be an eigenvector of $D$ is if each of its non-zero coordinates have the same degree in $G$, and since every coordinate of $\pf$ is non-zero, this implies that $G$ is regular. 
	\end{proof}
	
	\begin{thm}\label{T-pair}
		If $(N,P)$ is a Laplacian pair and $P^r=f(N)$, then $G$ is regular.
	\end{thm}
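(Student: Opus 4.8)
The plan is to reduce the statement immediately to Lemma~\ref{L-HW} by showing that the Perron eigenvector $\pf$ of $N$ is also an eigenvector of $P$. This fits the general strategy described before the lemmas: first one identifies an eigenvector $v$ forcing the desired property of $G$ (here, $v=\pf$, and Lemma~\ref{L-HW} says that if $\pf$ is an eigenvector of $P$ then $G$ is regular), and then one uses the ``eigenvector transfer'' lemmas to show that $\pf$ is shared by both matrices.

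Concretely, I would proceed as follows. By the definition of a Laplacian pair, $P$ is diagonalizable and has nonnegative spectrum, so $P$ may play the role of ``$X$'' in Lemma~\ref{L-same}. Since we are assuming $P^r=f(N)$, Lemma~\ref{L-same} applies with $X=P$ and $Y=N$, and its conclusion is that every eigenvector of $N$ is an eigenvector of $P$. In particular the Perron eigenvector $\pf$, which by the Perron--Frobenius theorem (applicable because $N$ is nonnegative and irreducible) satisfies $N\pf=\Lambda\pf$ with $\pf>0$, is an eigenvector of $P$, say $P\pf=\mu\pf$. Then Lemma~\ref{L-HW} finishes the argument: $bD\pf=(N+aP)\pf=(\Lambda+a\mu)\pf$, so $\pf$ is an eigenvector of $D$, and since all of its entries are positive, every vertex of $G$ has the same degree, i.e.\ $G$ is regular.

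I do not expect any real obstacle; the entire content has been packaged into Lemma~\ref{L-same}, Lemma~\ref{L-HW}, and the Perron--Frobenius theorem. The one point requiring care is that the hypotheses of Lemma~\ref{L-same} really are met: it is essential that the matrix being raised to the $r$-th power, namely $P$, is the one that is diagonalizable with nonnegative spectrum (not $N$, which need not have nonnegative spectrum when $N=A$). This is exactly why the notion of Laplacian pair was defined asymmetrically, so verifying the setup is routine.
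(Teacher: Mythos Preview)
Your proposal is correct and follows the same overall strategy as the paper: show that the Perron eigenvector $\pf$ of $N$ is also an eigenvector of $P$, then invoke Lemma~\ref{L-HW}. The only difference is in which ``transfer'' lemma is cited. You use Lemma~\ref{L-same} with $X=P$ and $Y=N$, which is exactly right: $P$ is diagonalizable with nonnegative spectrum, so every eigenvector of $N$ (in particular $\pf$) is an eigenvector of $P$. The paper instead cites Lemma~\ref{L-onedim}, but the parenthetical justification it gives (``$P$ has nonnegative spectrum'') is precisely the hypothesis of Lemma~\ref{L-same}, not of Lemma~\ref{L-onedim}; indeed, Lemma~\ref{L-onedim} applied to $P^r=f(N)$ would transfer eigenvectors \emph{from} $P$ \emph{to} $N$, which is the wrong direction here. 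So your citation is the cleaner one, and your remark that the one-dimensionality of $\Eig{N}{\Lambda}$ is not actually needed for this implication is well taken.
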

	\begin{proof}
		If $P^r=f(N)$, then $\pf$ will be an eigenvector of $P$ by Lemma~\ref{L-onedim} (as $\pf\in \Eig{N}{\Lambda},\ \dim\Eig{N}{\Lambda}=1$, and $P$ has nonnegative spectrum by definition of $(N,P)$ being a Laplacian pair).  $G$ being regular then follows from Lemma~\ref{L-HW}.
	\end{proof}
	\begin{cor}
		If $G$ is connected and $Q^r=f(A),\ L^r=f(A)$, or $L^r=f(Q)$, then $G$ is regular.
	\end{cor}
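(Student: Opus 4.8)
The plan is simply to observe that each of the three hypotheses is an instance of Theorem~\ref{T-pair} for an appropriate choice of Laplacian pair $(N,P)$, with $P^r=f(N)$. Recall from the discussion preceding Lemma~\ref{L-HW} that $(A,Q)$, $(A,L)$, and $(Q,L)$ are all Laplacian pairs: indeed $A-Q=-D$, $A+L=D$, and $Q+L=2D$, each of these has the form $N+aP=bD$ with $a,b\in\R\setminus\{0\}$, while $A$ and $Q$ are nonnegative and irreducible (using connectedness) and diagonalizable, and $Q$ and $L$ have nonnegative spectrum.

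First I would handle $Q^r=f(A)$: here $N=A$ and $P=Q$, so $(N,P)$ is a Laplacian pair and $P^r=f(N)$ says exactly $Q^r=f(A)$; Theorem~\ref{T-pair} then gives that $G$ is regular. Next, for $L^r=f(A)$ I would take $N=A$ and $P=L$; again $(A,L)$ is a Laplacian pair and $P^r=f(N)$ reads $L^r=f(A)$, so Theorem~\ref{T-pair} applies. Finally, for $L^r=f(Q)$ I would take $N=Q$ and $P=L$; since $(Q,L)$ is a Laplacian pair and $P^r=f(N)$ is $L^r=f(Q)$, Theorem~\ref{T-pair} once more forces $G$ to be regular.

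There is essentially no obstacle here — the corollary is a bookkeeping consequence of Theorem~\ref{T-pair}. The only point requiring a moment's care is checking that in each of the three cases the matrix playing the role of $N$ is the one that is irreducible (namely $A$ or $Q$, which are genuinely nonnegative, rather than $L$, which is not), and that the matrix playing the role of $P$ has nonnegative spectrum (true for $Q$ and $L$ but not in general for $A$, which is why $A^r=f(Q)$ and $A^r=f(L)$ are \emph{not} covered by this corollary and must be treated separately). Since in all three listed relations the left-hand side is $Q^r$ or $L^r$ and the polynomial is applied to $A$ or $Q$, the roles match up correctly, and the proof is complete.
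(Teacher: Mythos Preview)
Your proposal is correct and follows exactly the paper's approach: the paper's proof simply notes that $(A,Q)$, $(A,L)$, and $(Q,L)$ are Laplacian pairs and invokes Theorem~\ref{T-pair}. Your additional remarks about which matrix must play the role of $N$ versus $P$ are accurate and make explicit what the paper leaves implicit.
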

	\begin{proof}
		$(A,Q),\ (A,L)$, and $(Q,L)$ are all Laplacian pairs, so this immediately follows from Theorem~\ref{T-pair}.
	\end{proof}
	
	\begin{thm}\label{T-LQ}
		If $G$ is connected and $Q^r=f(L)$, then $G$ is regular.
	\end{thm}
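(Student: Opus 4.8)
The plan is to follow the template used for Theorem~\ref{T-pair} (produce a common eigenvector of the two matrices, then invoke Lemma~\ref{L-HW}), but with one twist. The relation $Q^r=f(L)$ is of the shape $N^r=f(P)$ for the Laplacian pair $(N,P)=(Q,L)$, i.e.\ it is the \emph{opposite} direction from the one Theorem~\ref{T-pair} handles, so that theorem cannot be quoted verbatim. The key extra fact that rescues us here is that, unlike $A$, the matrix $Q$ has nonnegative spectrum (and, being real symmetric, is diagonalizable); the same is true of $L$. This is exactly the hypothesis of Lemma~\ref{L-bothNonneg}.

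Concretely, first note that since $G$ is connected $Q$ is nonnegative and irreducible, so by Perron--Frobenius (Theorem~\ref{T-pf}) its largest eigenvalue $\Lambda$ has $\dim\Eig{Q}{\Lambda}=1$, with associated positive eigenvector $\pf$. Now apply Lemma~\ref{L-bothNonneg} with $X=Q$, $Y=L$, $\mu=\Lambda$, and $v=\pf$: both $Q$ and $L$ are diagonalizable with nonnegative spectrum, $\dim\Eig{Q}{\Lambda}=1$, and $Q^r=f(L)$ is of the form $X^r=f(Y)$. The lemma then yields that $\pf$ is an eigenvector of $L$.

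To finish, observe that $(Q,L)$ is a Laplacian pair (indeed $Q+L=2D$, and the remaining conditions hold as noted in the paper), and that $\pf$ is precisely the positive Perron eigenvector of the nonnegative matrix $Q$. Hence Lemma~\ref{L-HW} applies and gives that $G$ is regular.

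I do not expect a genuinely hard step; the only point requiring care is the one flagged above — resisting the urge to cite Theorem~\ref{T-pair} directly, and instead going through Lemma~\ref{L-bothNonneg} using the nonnegativity of the spectrum of $Q$. As an even quicker alternative one can avoid the Perron eigenvector entirely: by Lemma~\ref{L-same} applied to $X=Q$, every eigenvector of $L$ is an eigenvector of $Q$; taking $\mathbf{1}\in\Eig{L}{0}$, which is one-dimensional since $G$ is connected, we get that $\mathbf{1}$ is an eigenvector of $Q$, and since $Q\mathbf{1}$ equals twice the degree vector of $G$, all vertex degrees must coincide, i.e.\ $G$ is regular.
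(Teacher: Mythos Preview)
Your main argument is correct and essentially identical to the paper's own proof: it too takes the Perron eigenvector $\pf$ of $Q$, invokes Lemma~\ref{L-bothNonneg} (using that both $Q$ and $L$ have nonnegative spectrum) to conclude $\pf$ is an eigenvector of $L$, and finishes with Lemma~\ref{L-HW}. Your alternative via Lemma~\ref{L-same} and the vector $\mathbf{1}$ is also valid and is in fact a slightly more elementary route, since it bypasses Perron--Frobenius entirely; the paper does not take this shortcut here, though it uses the same $\mathbf{1}$-trick elsewhere (e.g.\ in Propositions~\ref{P-deg2} and~\ref{P-Qdeg2}).
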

	\begin{proof}
		Let $\pf$ be the positive eigenvector of $Q$ guaranteed by the Perron-Frobenius theorem. If we have $Q^r=f(L)$, then we conclude that $\pf$ is an eigenvector of $L$ by Lemma~\ref{L-bothNonneg}.  But $\pf$ being an eigenvector of both $L$ and $Q$ implies  that  $G$ is regular by Lemma~\ref{L-HW}.
	\end{proof}

	We now focus on Laplacian pairs with $N=A$.
	\begin{lem}\label{L-odd bip}
		If $(A,P)$ is a Laplacian pair and $A^r=f(P)$ with either $r$ odd or $G$ not bipartite, then $G$ is regular.
	\end{lem}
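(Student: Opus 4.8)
The plan is to follow the same template used for the earlier Laplacian-pair results: show that the Perron eigenvector $\pf$ of $A$ is also an eigenvector of $P$, and then invoke Lemma~\ref{L-HW} to conclude that $G$ is regular. Since $(A,P)$ is a Laplacian pair, $A$ is diagonalizable (indeed symmetric) and $P$ is diagonalizable by hypothesis, so all of the structural lemmas above are available.

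The one ingredient not yet isolated is a statement about the $r$-th power, so first I would argue that under either hypothesis we have $\Eig{A^r}{\Lambda^r}=\Eig{A}{\Lambda}$, which has dimension one by the Perron--Frobenius theorem. To see this, fix a basis of eigenvectors of $A$ and note, exactly as in the proof of Lemma~\ref{L-same}, that $\Eig{A^r}{\Lambda^r}=\bigoplus_{\mu_i^r=\Lambda^r}\Eig{A}{\mu_i}$, where the $\mu_i$ range over the eigenvalues of $A$. If $r$ is odd, then $t\mapsto t^r$ is injective on $\R$, so the only eigenvalue of $A$ with $\mu_i^r=\Lambda^r$ is $\mu_i=\Lambda$. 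If instead $r$ is even, a priori $\mu_i=\pm\Lambda$ is possible, but $-\Lambda$ is an eigenvalue of $A$ if and only if $G$ is bipartite (see Proposition~3.4.1 of \cite{brouwer}); since $G$ is not bipartite in this case, $-\Lambda$ is not in the spectrum of $A$. In either case $\Eig{A^r}{\Lambda^r}=\Eig{A}{\Lambda}$.

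With this in hand, I would apply Lemma~\ref{L-onedim} with $X=A$, $Y=P$, and $\mu=\Lambda$ (using $\dim\Eig{A}{\Lambda}=1$ and $\Eig{A}{\Lambda}=\Eig{A^r}{\Lambda^r}$ just established) to conclude that $\pf$ is an eigenvector of $P$, and then Lemma~\ref{L-HW} immediately gives that $G$ is regular.

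I expect the only subtle point to be the parity case analysis combined with the spectral characterization of bipartiteness — everything else is a direct citation of the lemmas already proved. The hypothesis ``$r$ odd or $G$ not bipartite'' is precisely what rules out the eigenvalue $-\Lambda$ contributing to $\Eig{A^r}{\Lambda^r}$; when $G$ is bipartite and $r$ is even one has $\dim\Eig{A^r}{\Lambda^r}=2$ (since $\dim\Eig{A}{-\Lambda}=1$ as well), so the one-dimensionality hypothesis of Lemma~\ref{L-onedim} genuinely fails and some extra hypothesis is unavoidable for this line of argument.
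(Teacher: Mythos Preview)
Your proof is correct and follows essentially the same approach as the paper: you establish $\Eig{A^r}{\Lambda^r}=\Eig{A}{\Lambda}$ via the odd/non-bipartite case split (citing the same spectral characterization of bipartiteness), then invoke Lemma~\ref{L-onedim} and Lemma~\ref{L-HW} exactly as the paper does. Your closing remark on why the hypothesis is genuinely needed is a nice addition not present in the original.
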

	\begin{proof}
		Since $A$ has real spectrum it will always be the case that $\Eig{A^r}{\mu}=\Eig{A}{\mu^{1/r}}$ if $r$ is odd, and $\Eig{A^r}{\mu}=\Eig{A}{\mu^{1/r}}\oplus\Eig{A}{-\mu^{1/r}}$ if $r$ is even.  If $r$ is odd, then in particular we have $\Eig{A}{\Lambda}=\Eig{A^r}{\Lambda^r}$.  If $G$ is not bipartite then $-\Lambda$ is not an eigenvalue of $A$ (see Proposition~3.4.1 of \cite{brouwer}), and hence for all $r$ we have $\Eig{A}{\Lambda}=\Eig{A}{\Lambda}\oplus \Eig{A}{-\Lambda}=\Eig{A^r}{\Lambda^r}$.  As $\dim \Eig{A}{\Lambda}=1$ with $\pf\in \Eig{A}{\Lambda}$, we conclude in either case that $\pf$ is an eigenvector of $P$ by Lemma~\ref{L-onedim}, so $G$ must be regular by Lemma~\ref{L-HW}.
	\end{proof}
	
	For a bipartite graph $G$ with vertex partition $V_1\cup V_2$, let $\mathbf{1}'$ be defined by $\mathbf{1}'_v=1$ if $v\in V_1$ and $\mathbf{1}'_v=-1$ if $v\in V_2$.
	
	\begin{lem}\label{L-1}
		If $G$ is a bipartite graph with vertex partition $V_1\cup V_2$ and either $\mathbf{1}'$ or $\mathbf{1}$ is an eigenvector of $A^2$, then $G$ is biregular.
	\end{lem}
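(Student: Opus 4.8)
The plan is to exploit the block structure of $A$ coming from the bipartition. Writing
\[
A=\begin{pmatrix}0&B\\ B^T&0\end{pmatrix},\qquad A^2=\begin{pmatrix}BB^T&0\\ 0&B^TB\end{pmatrix},
\]
with $B$ the $|V_1|\times|V_2|$ biadjacency matrix of $G$, and noting that each of $\mathbf 1$ and $\mathbf 1'$ restricts to $\pm\mathbf 1$ on $V_1$ and on $V_2$, the hypothesis in either form is equivalent to the existence of a scalar $\lambda$ with $BB^T\mathbf 1_{V_1}=\lambda\mathbf 1_{V_1}$ and $B^TB\mathbf 1_{V_2}=\lambda\mathbf 1_{V_2}$; in particular the two forms of the hypothesis are equivalent, so I would treat them at once. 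The degenerate cases $|V_1|=1$ or $|V_2|=1$ I would handle separately: there connectedness forces $G$ to be a star, which is biregular, so from now on assume $|V_1|,|V_2|\ge2$.

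The crux is a short trick with the degree vector. Since $B\mathbf 1_{V_2}$ is precisely the vector $\mathbf d_1$ of degrees of the vertices of $V_1$, and $B^T\mathbf 1_{V_1}=\mathbf d_2$ is the degree vector on $V_2$, multiplying $B^TB\mathbf 1_{V_2}=\lambda\mathbf 1_{V_2}$ on the left by $B$ yields $BB^T\mathbf d_1=\lambda B\mathbf 1_{V_2}=\lambda\mathbf d_1$, and symmetrically $B^TB\mathbf d_2=\lambda\mathbf d_2$. Thus the symmetric nonnegative matrix $BB^T$ has \emph{two} positive eigenvectors for the eigenvalue $\lambda$, namely $\mathbf 1_{V_1}$ and $\mathbf d_1$, and likewise $B^TB$ has $\mathbf 1_{V_2}$ and $\mathbf d_2$.

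To conclude I would appeal to Perron-Frobenius (Theorem~\ref{T-pf}). Because $G$ is connected and $|V_1|\ge2$, the matrix $BB^T$ is irreducible: its off-diagonal support is the graph on $V_1$ joining two vertices that have a common neighbour in $G$, and this graph is connected since any two vertices of $V_1$ are linked in $G$ by a walk of even length. An irreducible nonnegative matrix admits a positive eigenvector only for its largest eigenvalue, and that eigenspace is one-dimensional; hence $\lambda$ is that largest eigenvalue and $\mathbf d_1$, lying in the same one-dimensional eigenspace as $\mathbf 1_{V_1}$, is a scalar multiple of $\mathbf 1_{V_1}$. Therefore all vertices of $V_1$ have a common degree $d_1$, and the identical argument applied to $B^TB$ gives a common degree $d_2$ on $V_2$, so $G$ is $(d_1,d_2)$-biregular. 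The one genuinely clever point is the degree-vector manipulation in the second paragraph; the step most likely to need care is verifying the irreducibility of $BB^T$ and $B^TB$ (this is exactly where the standing hypothesis that $G$ is connected gets used), together with being careful to apply the Perron-Frobenius uniqueness to these block matrices rather than to $A$ itself, whose own Perron eigenvector need not be $\mathbf 1$.
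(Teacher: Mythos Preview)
Your proof is correct but takes a genuinely different route from the paper's. The paper never invokes Perron--Frobenius here: it observes that either hypothesis is equivalent to the quantity $\sum_{uv\in E(G)}d_u$ being constant over all vertices $v$, and then runs an extremal inequality argument. Taking $v$ of minimum degree $d$ and $v'$ of maximum degree $D$, the chain
\[
\lambda=\sum_{uv\in E}d_u\le dD\le \sum_{uv'\in E}d_u=\lambda
\]
forces equality throughout, so every neighbour of a degree-$d$ vertex has degree $D$ and vice versa; connectedness and the given bipartition then finish the job.

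Your argument instead exploits the block structure of $A^2$ to produce \emph{two} positive eigenvectors $\mathbf 1_{V_1}$ and $\mathbf d_1$ of $BB^T$ for the same eigenvalue, and lets Perron--Frobenius collapse them into one line. This is slicker linear algebra and has the pleasant side effect of making the equivalence of the $\mathbf 1$ and $\mathbf 1'$ hypotheses transparent from the outset. The paper's proof, on the other hand, is more self-contained (it avoids the irreducibility check for $BB^T$ and the slightly stronger form of Perron--Frobenius asserting that a positive eigenvector can only occur at the top eigenvalue), and the same inequality template is reused verbatim in Lemmas~\ref{L-NL}, \ref{L-NLQ}, \ref{L-NLL}, and Propositions~\ref{P-deg2}--\ref{P-Qdeg2}, so it earns its keep through repetition. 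One small remark: your appeal to ``a positive eigenvector only for its largest eigenvalue'' is not literally contained in the paper's statement of Theorem~\ref{T-pf}, but it follows immediately for the symmetric matrix $BB^T$ since eigenvectors for distinct eigenvalues are orthogonal and two entrywise-positive vectors cannot be.
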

	\begin{proof}
		\[(A^2\mathbf{1}')_v=\sum_{u\in V(G)}(\mathbf{1}'_u) m_2(u,v)=(\mathbf{1}'_v)\sum_{u\in V(G)} m_2(u,v),\] as every vertex that $v$ can reach in two steps belongs to the same partition class as $v$.   Thus $\mathbf{1}'$ will be an eigenvector of $A^2$ iff $\sum_{u\in V(G)} m_2(u,v)$ is equal to the same value for all $v$, and it is clear that this is also an equivalent condition for $\mathbf{1}$ being an eigenvector of $A^2$.  We note that \[\sum_{u\in V(G)} m_2(u,v)=\sum_{uv\in E(G)}d_u,\] as every walk of length two starting from $v$ is characterized by walking along an edge to some $u$ and then taking one of the $d_u$ edges connected to $u$.  Thus $\mathbf{1}'$ or $\mathbf{1}$ is an eigenvector of $A^2$ iff $\sum_{uv\in E(G)}d_u$ is the same value for all $v$.
		
		Assume that there exists a $\lambda$ such that $\lambda= \sum_{uv\in E} d_u$ for all $v$.  Let $v$ be a vertex with minimum degree $d$, and let $v'$ be a vertex with maximum degree $D$.  Then
		\[
		\lambda=\sum_{uv\in E} d_u\le d\cdot D\le \sum_{uv'\in E}d_u=\lambda,
		\]
		where the first inequality follows from the fact that each of the $d$ terms in the sum can have value at most $D$, and the second from the fact that each of the $D$  terms in the sum have value at least $d$.  Since both sides of the inequality are equal, both inequalities must in fact be equalities.  We conclude that if a vertex in $G$ has degree $d$ then all of its neighbors have degree $D$, and conversely if a vertex in $G$ has degree $D$ then all of its neighbors will have degree $d$.  Since $G$ is assumed to be connected, it follows that all vertices must have degree $d$  or $D$.  Moreover, all the vertices of $V_1$ have the same degree, and similarly all the vertices of $V_2$ have the same degree.  Thus $G$ is biregular.
	\end{proof}
	\begin{thm}\label{T-two}
		If $G$ is connected and $A^r=f(Q)$ or $A^r=f(L)$, then $G$ is regular or biregular.
	\end{thm}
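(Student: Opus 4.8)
The plan is to split into cases by the parity of $r$ and whether $G$ is bipartite, pushing all of the real content onto Lemmas~\ref{L-odd bip} and \ref{L-1}. Since $(A,Q)$ and $(A,L)$ are Laplacian pairs, Lemma~\ref{L-odd bip} immediately handles the case in which $r$ is odd or $G$ is not bipartite: in that situation $G$ is already regular. So for the rest of the argument I would assume that $r=2s$ is even and that $G$ is bipartite with vertex partition $V_1\cup V_2$, and try to show that $G$ is biregular.

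The key point I would use is that bipartiteness supplies, for free, an eigenvector of $Q$ and an eigenvector of $L$. A direct computation gives $A\mathbf{1}'=-D\mathbf{1}'$ (every neighbor of a vertex lies in the opposite class), so $Q\mathbf{1}'=(D+A)\mathbf{1}'=0$; and of course $L\mathbf{1}=(D-A)\mathbf{1}=0$. Next I would rewrite the hypothesis $A^r=f(Q)$ as $(A^2)^s=f(Q)$, and similarly $(A^2)^s=f(L)$ in the other case, where $s=r/2$ is a positive integer. Because $A^2$ is diagonalizable with nonnegative spectrum, Lemma~\ref{L-same} applies with $X=A^2$ and $Y=Q$ (respectively $Y=L$): the eigenvector $\mathbf{1}'$ of $Q$ (respectively $\mathbf{1}$ of $L$) must then be an eigenvector of $A^2$. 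Finally, Lemma~\ref{L-1} says that if $\mathbf{1}'$ or $\mathbf{1}$ is an eigenvector of $A^2$ for a bipartite graph, then $G$ is biregular, which finishes the proof.

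I do not anticipate a genuine obstacle here: the heavy lifting has all been done in the earlier lemmas, and what remains is bookkeeping. The only things to verify with any care are the two identities $Q\mathbf{1}'=0$ and $L\mathbf{1}=0$ (routine for bipartite $G$) and the legitimacy of invoking Lemma~\ref{L-same} with exponent $s$ after writing $A^r=(A^2)^s$; the mild risk is simply forgetting one of the cases, but all three (odd $r$; non-bipartite $G$; even $r$ with $G$ bipartite) are covered, the first two both by Lemma~\ref{L-odd bip}.
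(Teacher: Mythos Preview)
Your proposal is correct and follows essentially the same route as the paper: both proofs dispose of the ``$r$ odd or $G$ not bipartite'' case via Lemma~\ref{L-odd bip}, then in the remaining bipartite even case rewrite $A^r=(A^2)^{r/2}$, invoke Lemma~\ref{L-same} to transfer the eigenvector $\mathbf{1}'$ of $Q$ (respectively $\mathbf{1}$ of $L$) to $A^2$, and finish with Lemma~\ref{L-1}. Your explicit verification that $Q\mathbf{1}'=0$ via $A\mathbf{1}'=-D\mathbf{1}'$ is a nice addition the paper leaves implicit.
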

	\begin{proof}
		Let $P$ stand for either $Q$ or $L$, and assume that $A^r=f(P)$.  If $r$ is odd or $G$ is not bipartite, then $G$ must be regular by Lemma~\ref{L-odd bip}, so we will assume that $G$ is bipartite and $r=2k$ for some $k$.  In this case we have $(A^2)^k=f(P)$, so by Lemma~\ref{L-same} any eigenvector of $P$ will also be an eigenvector of $A^2$.  If $P=Q$ and if $G$ is bipartite, then it is easy to see that  $\mathbf{1}'$ will be an eigenvector of $P$, and hence of $A^2$.  If $P=L$, then $\mathbf{1}$ is an eigenvector of $P$ and hence of $A^2$.  In either case we conclude that $G$ is biregular by Lemma~\ref{L-1}.
	\end{proof}
	
	\section{Relations Involving $\NL$}\label{S-NL}
	From the definition of $\mathcal{L}$ it is immediate that if $G$ is $d$-regular or $(d_1,d_2)$-biregular then $\mathcal{L}=I-\frac{1}{d}A$ or $\mathcal{L}=I-\frac{1}{\sqrt{d_1d_2}}A$, so when $G$ is regular or biregular it is possible to have $A^r=f(\NL)$  and $\NL^r=f(A)$.  
	
	We note the following (see \cite{chungButler}).  If $G$ is connected then $\Eig{\NL}{0}$ has dimension 1 and is spanned by $D^{1/2}\mathbf{1}$.  If $G$ is connected then $\Eig{L}{0}$ has dimension 1 and is spanned by $\mathbf{1}$.
	
	\begin{lem}\label{L-NL}
		If $D^{1/2}\mathbf{1}$ is an eigenvector of $A$, then $G$ is regular or biregular.
	\end{lem}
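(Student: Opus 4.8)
The plan is to argue exactly as in the second half of the proof of Lemma~\ref{L-1}, but with $\sqrt{d_u}$ in place of $d_u$. Writing the hypothesis out coordinate-wise, the assumption that $D^{1/2}\mathbf{1}$ is an eigenvector of $A$ with eigenvalue $\lambda$ says precisely that
\[
\sum_{u:\,uv\in E(G)}\sqrt{d_u}=\lambda\sqrt{d_v}\qquad\text{for every }v\in V(G).
\]
The first step is to pin down $\lambda$. Let $\delta$ and $\Delta$ be the minimum and maximum degrees of $G$. Evaluating the identity at a vertex $v$ of degree $\delta$, the left-hand side is a sum of $\delta$ terms each at most $\sqrt{\Delta}$, so $\lambda\sqrt{\delta}\le\delta\sqrt{\Delta}$, i.e.\ $\lambda\le\sqrt{\delta\Delta}$. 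Evaluating it at a vertex $v'$ of degree $\Delta$, the left-hand side is a sum of $\Delta$ terms each at least $\sqrt{\delta}$, so $\lambda\sqrt{\Delta}\ge\Delta\sqrt{\delta}$, i.e.\ $\lambda\ge\sqrt{\delta\Delta}$. Hence $\lambda=\sqrt{\delta\Delta}$, and moreover both of these estimates must be equalities.

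The second step is to read off the structural consequence of that tightness, just as in Lemma~\ref{L-1}. For any vertex $v$ of degree $\delta$, the left side of the identity above is a sum of $\delta$ terms, each at most $\sqrt{\Delta}$, that equals $\lambda\sqrt{\delta}=\delta\sqrt{\Delta}$; hence every term equals $\sqrt{\Delta}$, i.e.\ every neighbor of $v$ has degree $\Delta$. Symmetrically, every neighbor of a vertex of degree $\Delta$ has degree $\delta$. If $\delta=\Delta$ then $G$ is regular. Otherwise the sets $V_\delta$ and $V_\Delta$ of vertices of degree $\delta$ and of degree $\Delta$ are disjoint and nonempty, no edge of $G$ lies inside either set, and --- since $G$ is connected --- every vertex of $G$ lies in $V_\delta\cup V_\Delta$ (a vertex outside would force an edge from $V_\delta\cup V_\Delta$ to its complement, which the two adjacency conclusions forbid). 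Hence $(V_\delta,V_\Delta)$ is a bipartition of $G$ in which all vertices of one part have degree $\delta$ and all vertices of the other have degree $\Delta$, so $G$ is $(\delta,\Delta)$-biregular.

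I do not expect a real obstacle: the argument is self-contained and uses neither the lemmas of Section~\ref{S-Q} nor Perron--Frobenius (although one could observe that $D^{1/2}\mathbf{1}$, being a positive eigenvector of the irreducible matrix $A$, is necessarily its Perron eigenvector, so that $\lambda=\Lambda$). The only points needing a word of care are propagating the local degree condition across $G$ via connectedness, handled exactly as in Lemma~\ref{L-1}, and the trivial edgeless case; but a connected graph with at least one edge has $\delta\ge1$, so $\lambda=\sqrt{\delta\Delta}>0$ and the coordinate identity above is never vacuous.
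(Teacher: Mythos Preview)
Your proof is correct and follows essentially the same route as the paper's: both write the eigenvector condition coordinatewise as $\sum_{uv\in E}\sqrt{d_u}=\lambda\sqrt{d_v}$, sandwich $\lambda$ between $\sqrt{\delta\Delta}$ and itself by evaluating at vertices of extremal degree, and then read off from the resulting equalities that minimum- and maximum-degree vertices alternate, using connectedness to conclude regularity or biregularity. Your version is a touch more explicit than the paper's about why every vertex ends up in $V_\delta\cup V_\Delta$, but the argument is the same.
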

	\begin{proof}
		$D^{1/2}\mathbf{1}$ being an eigenvector of $A$ is equivalent to the statement that there exists a $\lambda$ such that $\sum_{uv\in E(G)}\sqrt{d_u}=\lambda \sqrt{d_v}$ for all $v$, or equivalently that for all $v$ $\frac{1}{\sqrt{d_v}}\sum_{uv\in E(G)}\sqrt{d_u}$ is the same value, $\lambda$.  Assume that this condition holds and let $v$ be a vertex of minimal degree $d$ and $v'$ a vertex of maximum degree $D$.  Then
		
		\[
		\lambda=\frac{1}{\sqrt{d}}\sum_{uv\in E}\sqrt{d_u}\le \sqrt{d D}\le \frac{1}{\sqrt{D}}\sum_{uv\in E}\sqrt{d_u}=\lambda,
		\]
		since the first sum has $d$ terms that are at most $\sqrt{D}$ and the second has $D$ terms that are at least $\sqrt{d}$.  We conclude that the inequalities are equalities, and hence that all vertices have degree $d$ or $D$, and that every neighbor of a vertex with degree $d$ has degree $D$ and vice versa.  If $d=D$ we conclude that $G$ is regular.  If $d\ne D$ we can partition vertices into those with degree $d$ and those with degree $D$, and this shows that $G$ is bipartite and hence biregular.
	\end{proof}
	\begin{thm}\label{T-NLA}
		If $G$ is connected and $\NL^r=f(A)$, then $G$ is regular or biregular.
	\end{thm}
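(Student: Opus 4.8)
The plan is to show that the vector $D^{1/2}\mathbf{1}$, which spans the kernel of $\NL$, must also be an eigenvector of $A$; once this is established, Lemma~\ref{L-NL} immediately gives that $G$ is regular or biregular. So the whole argument reduces to pinning down the distinguished one-dimensional eigenspace of $\NL$ and transferring it across the relation $\NL^r=f(A)$.

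First I would record the relevant structural facts: $\NL$ is a real symmetric matrix, hence diagonalizable, and it has nonnegative spectrum; moreover, since $G$ is connected, $\Eig{\NL}{0}$ is one-dimensional and spanned by $D^{1/2}\mathbf{1}$, as noted just before the statement. The next step is to observe that this eigenspace is unchanged under taking $r$th powers: because every eigenvalue $\mu$ of $\NL$ satisfies $\mu\ge 0$, we have $\mu^r=0$ if and only if $\mu=0$, and therefore $\Eig{\NL^r}{0}=\Eig{\NL}{0}$.

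With these observations in hand, I would apply Lemma~\ref{L-onedim} with $X=\NL$, $Y=A$, and $\mu=0$: the hypotheses $\Eig{X}{\mu}=\Eig{X^r}{\mu^r}$ and $\dim\Eig{X}{\mu}=1$ are exactly what the previous paragraph supplies, so the conclusion is that $D^{1/2}\mathbf{1}$ is an eigenvector of $A$. Lemma~\ref{L-NL} then finishes the proof.

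There is no serious obstacle here; the only point requiring care is that one cannot argue via a Perron--Frobenius eigenvector as in the Laplacian-pair arguments of Section~\ref{S-Q}, since $\NL$ is not linked to $A$ and $D$ by a linear equation of the form $N+aP=bD$. Instead the role of the distinguished one-dimensional eigenspace is played by $\ker\NL$. One must also invoke Lemma~\ref{L-onedim} rather than Lemma~\ref{L-bothNonneg}, because $A$ need not have nonnegative spectrum; but since $\NL$ does, the extra hypothesis of Lemma~\ref{L-onedim} is automatic, so this causes no difficulty.
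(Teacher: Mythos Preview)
Your proposal is correct and follows essentially the same route as the paper: verify $\Eig{\NL}{0}=\Eig{\NL^r}{0}$ is one-dimensional and spanned by $D^{1/2}\mathbf{1}$, invoke Lemma~\ref{L-onedim} to conclude that $D^{1/2}\mathbf{1}$ is an eigenvector of $A$, and finish with Lemma~\ref{L-NL}. Your additional remarks about why Lemma~\ref{L-onedim} rather than Lemma~\ref{L-bothNonneg} is the right tool here are accurate and a nice clarification.
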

	\begin{proof}
		$\Eig{\NL}{0}=\Eig{\NL^r}{0},\ \dim \Eig{\NL}{0}=1$ and $D^{1/2}\mathbf{1}\in \Eig{\NL}{0}$.  Thus if $\NL^r=f(A)$, then $D^{1/2}\mathbf{1}$ will be an eigenvector of $A$ by Lemma~\ref{L-onedim}, and this implies that $G$ is either regular or biregular by Lemma~\ref{L-NL}.
	\end{proof}
	
	\begin{lem}\label{L-NLQ}
		If $D^{1/2}\mathbf{1}$ is an eigenvector of $Q$, then $G$ is regular.
	\end{lem}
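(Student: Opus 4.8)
The plan is to mimic the proof of Lemma~\ref{L-NL}: extract a scalar identity from the eigenvector equation and then squeeze the minimum and maximum degrees together. First I would unwind what it means for $D^{1/2}\mathbf{1}$ to be an eigenvector of $Q=D+A$. Writing $QD^{1/2}\mathbf{1}=\mu D^{1/2}\mathbf{1}$ and reading off the $v$th coordinate gives $d_v\sqrt{d_v}+\sum_{uv\in E(G)}\sqrt{d_u}=\mu\sqrt{d_v}$, and dividing through by $\sqrt{d_v}$ (which is nonzero since $G$ is connected, hence has no isolated vertices) this becomes
\[
d_v+\frac{1}{\sqrt{d_v}}\sum_{uv\in E(G)}\sqrt{d_u}=\mu\qquad\text{for all }v.
\]

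Next I would let $v$ be a vertex of minimum degree $d$ and $v'$ a vertex of maximum degree $D$. Evaluating the identity at $v$: the sum has exactly $d$ terms, each at most $\sqrt{D}$, so $\mu\le d+\frac{1}{\sqrt{d}}\cdot d\sqrt{D}=d+\sqrt{dD}$. Evaluating the identity at $v'$: the sum has exactly $D$ terms, each at least $\sqrt{d}$, so $\mu\ge D+\frac{1}{\sqrt{D}}\cdot D\sqrt{d}=D+\sqrt{dD}$. Combining these, $D+\sqrt{dD}\le\mu\le d+\sqrt{dD}$, which forces $D\le d$; since always $d\le D$, we get $d=D$, i.e. $G$ is regular.

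I do not expect a genuine obstacle here — this is essentially the argument of Lemma~\ref{L-NL} with the extra diagonal term $d_v$ carried along. The one point worth emphasizing is precisely that extra term: in the $\NL$-versus-$A$ situation the analogous scalar identity had no $d_v$ summand, which is why a bipartite (biregular) solution could survive and the conclusion was only ``regular or biregular''; here the $+d_v$ on the left-hand side breaks that symmetry between the minimum-degree and maximum-degree ends of the inequality, so the conclusion is the strictly stronger statement that $G$ is regular.
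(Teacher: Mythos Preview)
Your proof is correct and is essentially identical to the paper's own argument: both derive the scalar identity $d_v+\frac{1}{\sqrt{d_v}}\sum_{uv\in E}\sqrt{d_u}=\mu$ and then sandwich $\mu$ between $d+\sqrt{dD}$ and $D+\sqrt{dD}$ to force $d=D$. Your closing remark explaining why the extra $d_v$ term rules out the biregular case is a nice addition not present in the paper.
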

	\begin{proof}
		$D^{1/2}\mathbf{1}$ being an eigenvector of $A$ is equivalent to the statement that there exists a $\lambda$ such that $d_v\sqrt{d_v}+\sum_{uv\in E}\sqrt{d_u}=\lambda \sqrt{d_v}$ for all $v$, or equivalently that $d_v+\frac{1}{\sqrt{d_v}}\sum_{uv\in E}\sqrt{d_u}$ is the same for all $v$.  Assume that this condition holds and let $v$ be a vertex of minimal degree $d$ and $v'$ a vertex of maximum degree $D$.  Then
		
		\[
		\lambda=d+\frac{1}{\sqrt{d}}\sum_{uv\in E}\sqrt{d_u}\le d+ \sqrt{d D}\le D+\sqrt{d D}\le D+\frac{1}{\sqrt{D}}\sum_{uv\in E}\sqrt{d_u}=\lambda,
		\]
		since the first sum has $d$ terms that each have value at most $\sqrt{D}$ and the second has $D$ terms that each have value at least $\sqrt{d}$.  Thus every inequality must be an equality, and in particular this implies that $d=D$, so $G$ is regular.
	\end{proof}
	\begin{lem}\label{L-NLL}
		If $\mathbf{1}$ is an eigenvector of $\NL$, then $G$ is regular.
	\end{lem}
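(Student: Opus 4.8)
The plan is to mirror the proofs of Lemmas~\ref{L-NL} and~\ref{L-NLQ}, translating the eigenvector hypothesis into a degree condition and then squeezing with extremal vertices. First I would rewrite the hypothesis in coordinates. Since $\NL=I-D^{-1/2}AD^{-1/2}$, the vector $\mathbf{1}$ is an eigenvector of $\NL$ if and only if $D^{-1/2}AD^{-1/2}\mathbf{1}$ is a scalar multiple of $\mathbf{1}$. Computing the $v$th coordinate of $D^{-1/2}AD^{-1/2}\mathbf{1}$, this is equivalent to saying that there is a constant $\lambda$ with
\[
\frac{1}{\sqrt{d_v}}\sum_{uv\in E(G)}\frac{1}{\sqrt{d_u}}=\lambda
\]
for every vertex $v$; equivalently $\sum_{uv\in E(G)}\frac{1}{\sqrt{d_u}}=\lambda\sqrt{d_v}$ for all $v$. (Here $d_v\ge 1$ for every $v$ since $G$ has no isolated vertices, so these expressions make sense.)

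Next I would pin down the value of $\lambda$ by testing extremal vertices. Let $v$ be a vertex of minimum degree $d$ and $v'$ a vertex of maximum degree $D$. For $v$, each of the $d$ neighbors $u$ satisfies $d_u\ge d$, hence $1/\sqrt{d_u}\le 1/\sqrt{d}$, so $\lambda\sqrt{d}=\sum_{uv\in E}1/\sqrt{d_u}\le d/\sqrt{d}=\sqrt{d}$, giving $\lambda\le 1$. For $v'$, each of the $D$ neighbors $u$ satisfies $d_u\le D$, hence $1/\sqrt{d_u}\ge 1/\sqrt{D}$, so $\lambda\sqrt{D}=\sum_{uv'\in E}1/\sqrt{d_u}\ge D/\sqrt{D}=\sqrt{D}$, giving $\lambda\ge 1$. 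Therefore $\lambda=1$.

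Finally, with $\lambda=1$ both of the above inequalities are forced to be equalities. Equality in the bound for $v$ means every neighbor of $v$ has degree exactly $d$, i.e.\ every neighbor of a minimum-degree vertex is again a minimum-degree vertex; since $G$ is connected, propagating this shows every vertex has degree $d$, so $G$ is $d$-regular. I do not anticipate any real obstacle: the argument is essentially the same as in Lemma~\ref{L-NL}. The only point requiring slight care — in contrast with Lemma~\ref{L-NLQ}, where a single chain of inequalities forces $d=D$ immediately — is that here one must first deduce $\lambda=1$ and only then combine the equality-case analysis with connectedness to conclude regularity.
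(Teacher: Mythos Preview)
Your argument is correct. Both your proof and the paper's compute the coordinate expression for $\NL\mathbf{1}$ and use an extremal-degree inequality together with connectedness, but they differ in how they pin down the eigenvalue. The paper works with the $\NL$-eigenvalue $\lambda_{\NL}=1-\lambda$ (in your notation), examines only a vertex of maximum degree $D$ to obtain $\lambda_{\NL}\le 0$, and then invokes the fact that $\NL$ has nonnegative spectrum to force $\lambda_{\NL}=0$; from there every maximum-degree vertex has only maximum-degree neighbors, and connectedness finishes. You instead squeeze from both sides---minimum- and maximum-degree vertices---to deduce $\lambda=1$ directly, without appealing to the nonnegativity of the spectrum of $\NL$. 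Your route is therefore a bit more self-contained, while the paper's is slightly shorter by outsourcing one inequality to a known spectral fact; otherwise the two proofs are essentially the same.
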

	\begin{proof}
		We have that $(\NL\mathbf{1})_v=1-\frac{1}{\sqrt{d_v}}\sum_{uv\in E} \frac{1}{\sqrt{d_u}}$, and that $\mathbf{1}$ is an eigenvector of $\NL$ only if this value is equal to the same value $\lambda$ for all $v$.  Assume this is true and let $v$ be a vertex of maximum degree $D$.  We then have that \[\lambda=1-\frac{1}{\sqrt{D}}\sum_{uv\in E} \frac{1}{\sqrt{d_u}}\le 1-\frac{1}{\sqrt{D}}\frac{D}{\sqrt{D}}=0,\] since the sum is minimized when each of the terms is equal to $1/\sqrt{D}$.  But $\lambda \ge 0$ (because the spectrum of $\NL$ is nonnegative), so this inequality must be an equality.  This implies that every vertex of maximum degree is adjacent only to vertices of maximum degree, and since $G$ is connected, we conclude that $G$ is regular of degree $D$.
	\end{proof}

	\begin{thm}
		If $G$ is connected and $\NL^r=f(Q)$ or $Q^r=f(\NL)$, then $G$ is regular.
	\end{thm}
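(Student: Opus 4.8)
The plan is to follow the same template used throughout this section: exhibit a vector that must be a common eigenvector of $\NL$ and $Q$, and then read off regularity from a structural lemma already proved above. The natural candidate here is $D^{1/2}\mathbf{1}$, which spans the one-dimensional kernel $\Eig{\NL}{0}$ of $\NL$ when $G$ is connected (recorded just before Lemma~\ref{L-NL}).

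First I would note that $\NL$ and $Q$ are both symmetric, hence diagonalizable, and that both have nonnegative spectrum (as recalled before Lemma~\ref{L-same}). Moreover $\dim \Eig{\NL}{0}=1$ with $D^{1/2}\mathbf{1}\in \Eig{\NL}{0}$. Applying Lemma~\ref{L-bothNonneg} with $X=\NL$, $Y=Q$, and $\mu=0$ then shows that in either case — $\NL^r=f(Q)$ or $Q^r=f(\NL)$ — the vector $D^{1/2}\mathbf{1}$ is an eigenvector of $Q$. The reason for invoking Lemma~\ref{L-bothNonneg} rather than Lemma~\ref{L-onedim} or Lemma~\ref{L-same} separately is precisely that it handles both directions of the ``or'' simultaneously, since both $\NL$ and $Q$ carry nonnegative spectra.

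Finally, $D^{1/2}\mathbf{1}$ being an eigenvector of $Q$ forces $G$ to be regular by Lemma~\ref{L-NLQ}, completing the proof. I do not expect a genuine obstacle here: the only point requiring care is verifying that the hypotheses of Lemma~\ref{L-bothNonneg} (diagonalizability and nonnegative spectrum of \emph{both} matrices) hold, which they do. An alternative route would be to take the Perron eigenvector $\pf$ of $Q$, use Lemma~\ref{L-same} (resp.\ Lemma~\ref{L-onedim}) to show $\pf$ is also an eigenvector of $\NL$, and then argue directly; but the $\Eig{\NL}{0}$ route is shorter, because the relevant structural lemma (Lemma~\ref{L-NLQ}) is already stated in terms of $D^{1/2}\mathbf{1}$.
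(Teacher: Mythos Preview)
Your proposal is correct and follows essentially the same approach as the paper: apply Lemma~\ref{L-bothNonneg} (using $\dim\Eig{\NL}{0}=1$ and nonnegative spectra of both $\NL$ and $Q$) to conclude that $D^{1/2}\mathbf{1}$ is an eigenvector of $Q$, and then invoke Lemma~\ref{L-NLQ} to obtain regularity. Your write-up is in fact more detailed than the paper's two-line proof, but the argument is identical.
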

	\begin{proof}
		Either case implies that $D^{1/2}\mathbf{1}$ is an eigenvector of $Q$ by Lemma~\ref{L-bothNonneg}, and this implies that $G$ is regular by Lemma~\ref{L-NLQ}.
	\end{proof}
	\begin{thm}
		If $G$ is connected and $\NL^r=f(L)$ or $L^r=f(\NL)$, then $G$ is regular.
	\end{thm}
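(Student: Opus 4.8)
The plan is to follow exactly the template used throughout Section~\ref{S-NL}: identify a distinguished one-dimensional eigenspace that one of the two matrices is known to have, use one of the ``same eigenvector'' lemmas to push that eigenvector onto the other matrix, and then invoke a structural lemma that forces regularity. Here the natural choice is the kernel of $L$: since $G$ is connected, $\Eig{L}{0}$ is one-dimensional and is spanned by $\mathbf{1}$.

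First I would record that both $L$ and $\NL$ are real symmetric, hence diagonalizable, and both have nonnegative spectrum (as noted earlier in the excerpt and in \cite{chungButler}). This is precisely the hypothesis of Lemma~\ref{L-bothNonneg}. Taking $X=L$, $\mu=0$, and $v=\mathbf{1}\in \Eig{L}{0}$ with $\dim\Eig{L}{0}=1$, the lemma applies in either direction: the relation $\NL^r=f(L)$ is the case ``$Y^r=f(X)$'' with $Y=\NL$, and the relation $L^r=f(\NL)$ is the case ``$X^r=f(Y)$''. In both cases Lemma~\ref{L-bothNonneg} yields that $\mathbf{1}$ is an eigenvector of $\NL$.

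Finally I would apply Lemma~\ref{L-NLL}, which says that if $\mathbf{1}$ is an eigenvector of $\NL$ then $G$ is regular. Combining the two steps completes the proof. There is no real obstacle here: all the work has already been done in the preceding lemmas, and the only thing to get right is the bookkeeping of which matrix plays the role of $X$ and which plays $Y$ in Lemma~\ref{L-bothNonneg} so that both directions $\NL^r=f(L)$ and $L^r=f(\NL)$ are covered simultaneously. If one wished to avoid relying on Lemma~\ref{L-bothNonneg} twice, an equally short alternative is to run the two cases through Lemma~\ref{L-onedim} and Lemma~\ref{L-same} respectively, using $\Eig{L}{0}=\Eig{L^r}{0}$, but the unified statement is cleaner.
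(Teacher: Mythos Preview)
Your proposal is correct and matches the paper's proof essentially verbatim: apply Lemma~\ref{L-bothNonneg} with $X=L$, $Y=\NL$, $\mu=0$, $v=\mathbf{1}$ to conclude that $\mathbf{1}$ is an eigenvector of $\NL$, then invoke Lemma~\ref{L-NLL}. The bookkeeping of which case is $X^r=f(Y)$ versus $Y^r=f(X)$ is exactly right.
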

	\begin{proof}
		Either case implies that $\mathbf{1}$ is an eigenvector of $\NL$ by Lemma~\ref{L-bothNonneg}, and this implies that $G$ is regular by Lemma~\ref{L-NLL}.
	\end{proof}
	
	Of relations involving the four matrices $A,\ Q,\ L,$ and $\NL$, the only remaining case is $A^r=f(\NL)$.  Unfortunately, we do not have a complete characterization for this case, though experimental data suggests the following conjecture.
	\begin{conj}\label{con-full}
		If $G$ is connected and $A^r=f(\NL)$ for some polynomial $f$ and $r>0$, then $G$ is regular or biregular.
	\end{conj}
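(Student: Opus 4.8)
The plan is to split according to the parity of $r$ and whether $G$ is bipartite, and in each case to force a distinguished positive vector to be a shared eigenvector, exactly as in the rest of the paper.

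\emph{Case 1: $r$ is odd or $G$ is not bipartite.} Since $A$ is symmetric, $\Eig{A^r}{\mu}$ is the direct sum of the $\Eig{A}{\theta}$ over real $\theta$ with $\theta^r=\mu$; when $r$ is odd this leaves only $\theta=\mu^{1/r}$, and when $G$ is not bipartite $-\Lambda$ is not an eigenvalue of $A$, so in either event $\Eig{A}{\Lambda}=\Eig{A^r}{\Lambda^r}$, where $\Lambda$ is the Perron eigenvalue of $A$. As $\dim\Eig{A}{\Lambda}=1$ and the positive Perron vector $\pf$ lies in it, Lemma~\ref{L-onedim} (with $X=A$, $Y=\NL$) shows $\pf$ is an eigenvector of $\NL$. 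Now $\NL$ is symmetric and $\Eig{\NL}{0}$ is spanned by the positive vector $D^{1/2}\mathbf 1$; eigenvectors of a symmetric matrix for distinct eigenvalues are orthogonal, while two positive vectors are never orthogonal, so $\pf\in\Eig{\NL}{0}$. Hence $D^{1/2}\mathbf 1$ is a scalar multiple of $\pf$, in particular an eigenvector of $A$, and $G$ is regular or biregular by Lemma~\ref{L-NL}.

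\emph{Case 2: $r=2k$ is even and $G$ is bipartite.} Here $(A^2)^k=f(\NL)$, so by Lemma~\ref{L-same} (with $X=A^2$) the vector $D^{1/2}\mathbf 1$, being an eigenvector of $\NL$, is an eigenvector of $A^2$, say $A^2 D^{1/2}\mathbf 1=\nu D^{1/2}\mathbf 1$. It then suffices to prove the self-contained claim that \emph{for connected bipartite $G$, if $D^{1/2}\mathbf 1$ is an eigenvector of $A^2$ then it is an eigenvector of $A$} (and then Lemma~\ref{L-NL} finishes). To prove this I would exploit the bipartite block structure: with parts $V_1,V_2$ and biadjacency matrix $M$, the matrix $A^2$ acts as $MM^T$ on $V_1$ and $M^TM$ on $V_2$, so writing $w_i=(\sqrt{d_v})_{v\in V_i}$ the eigenvector equation splits into $MM^Tw_1=\nu w_1$ and $M^TMw_2=\nu w_2$. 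Since $G$ is connected, $MM^T$ and $M^TM$ are nonnegative and irreducible, so by Perron--Frobenius $\nu$ is their common spectral radius and each of $\Eig{MM^T}{\nu}$, $\Eig{M^TM}{\nu}$ is one-dimensional, spanned by the positive vectors $w_1,\ w_2$. Then $Mw_2$ is a positive vector with $MM^T(Mw_2)=\nu(Mw_2)$, so $Mw_2=cw_1$ for some $c>0$; comparing squared norms gives $c^2\|w_1\|^2=\|Mw_2\|^2=w_2^TM^TMw_2=\nu\|w_2\|^2$, and the bipartite handshake identity $\|w_1\|^2=\sum_{v\in V_1}d_v=|E(G)|=\sum_{v\in V_2}d_v=\|w_2\|^2$ forces $c=\sqrt\nu$. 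By the symmetric argument $M^Tw_1=\sqrt\nu w_2$, and reading off coordinates yields $\sum_{u\sim x}\sqrt{d_u}=\sqrt\nu\sqrt{d_x}$ for every vertex $x$, i.e.\ $AD^{1/2}\mathbf 1=\sqrt\nu D^{1/2}\mathbf 1$, proving the claim.

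The main obstacle is Case~2: the extremal (min/max degree) arguments of Lemmas~\ref{L-1} and \ref{L-NL} cannot be used directly, since a genuinely $(d_1,d_2)$-biregular graph has $\nu=d_1d_2$, which is strictly between $(\min_v d_v)^2$ and $(\max_v d_v)^2$, so no purely extremal estimate can by itself rule out the intermediate regime. The step I would most want to double-check is the reduction of $A^2$ to the decomposition $MM^T\oplus M^TM$ together with the observation that the handshake identity $\|w_1\|=\|w_2\|$ pins down the eigenvalue exactly; a secondary, routine point is verifying irreducibility of $MM^T$ and $M^TM$, which follows from connectivity of $G$ by tracking an even-length path between two vertices lying in the same part.
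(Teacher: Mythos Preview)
The paper does \emph{not} prove this statement: it is listed as Conjecture~\ref{con-full}, explicitly described as open. The paper's partial progress is Proposition~\ref{P-odd} (the case $r$ odd) together with Proposition following Conjecture~\ref{con-square}, which reduces the remaining even-$r$ case to the unproved Conjecture~\ref{con-square} that $D^{1/2}\mathbf 1$ being an eigenvector of $A^2$ forces $G$ to be regular or biregular.

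Your argument, by contrast, appears to settle the conjecture. Your Case~1 is a mild reorganisation of ideas already in the paper (essentially Lemma~\ref{L-odd bip} transported to $\NL$, plus the observation that two positive eigenvectors of a symmetric matrix must share an eigenvalue); the paper's Proposition~\ref{P-odd} covers odd $r$ by the dual route of pushing $D^{1/2}\mathbf 1$ into $A$ rather than $\pf$ into $\NL$, and does not record the non-bipartite even-$r$ subcase, but that subcase follows from the same circle of ideas. The genuinely new content is your Case~2, which proves the bipartite instance of Conjecture~\ref{con-square}: writing $A^2=MM^T\oplus M^TM$ and using Perron--Frobenius on each block forces $Mw_2=c\,w_1$ and $M^Tw_1=c'\,w_2$, and the handshake identity $\|w_1\|^2=\sum_{v\in V_1}d_v=|E(G)|=\|w_2\|^2$ pins down $c=c'=\sqrt\nu$, upgrading the $A^2$-eigenvector to an $A$-eigenvector so that Lemma~\ref{L-NL} applies. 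I checked the steps you flagged: irreducibility of $MM^T$ and $M^TM$ does follow from connectivity of $G$ via even-length walks within a part, $Mw_2$ is strictly positive since no vertex is isolated, the positive eigenvector of an irreducible nonnegative symmetric matrix must sit at the spectral radius, and once $Mw_2=\sqrt\nu\,w_1$ one gets $M^Tw_1=\nu^{-1/2}M^TMw_2=\sqrt\nu\,w_2$ directly. So your proposal goes strictly beyond the paper and, as far as I can see, resolves Conjecture~\ref{con-full} (and the bipartite case of Conjecture~\ref{con-square}).
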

	
	We present some partial results related to this conjecture.
	
	\begin{prop}\label{P-odd}
		If $A^r=f(\NL)$ with $r$ odd, then $G$ is regular or biregular.
	\end{prop}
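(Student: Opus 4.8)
The plan is to reduce immediately to Lemma~\ref{L-NL} by proving that $D^{1/2}\mathbf{1}$ is an eigenvector of $A$. The role of the hypothesis that $r$ is odd is to restore the eigenspace identity that fails for $A$ in general: since the spectrum of $A$ is real and $x\mapsto x^r$ is a bijection of $\R$ when $r$ is odd, for every eigenvalue $\mu$ of $A^r$ we have $\Eig{A^r}{\mu}=\Eig{A}{\mu^{1/r}}$, exactly as recalled in the proof of Lemma~\ref{L-odd bip}. Hence $A$ and $A^r$ have precisely the same eigenvectors.

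Next I would use that, since $G$ is connected, $\Eig{\NL}{0}$ is one-dimensional and spanned by $D^{1/2}\mathbf{1}$. In particular $D^{1/2}\mathbf{1}$ is an eigenvector of $\NL$ and therefore of $f(\NL)=A^r$ (with eigenvalue $f(0)$). By the previous paragraph $D^{1/2}\mathbf{1}$ is then an eigenvector of $A$, and Lemma~\ref{L-NL} gives that $G$ is regular or biregular.

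The one point to watch is that none of the packaged lemmas (Lemmas~\ref{L-same}, \ref{L-onedim}, \ref{L-bothNonneg}) applies off the shelf here, since $A$ does not have nonnegative spectrum and we have no a priori control on $\dim\Eig{A}{\mu}$; so one invokes the odd-power eigenspace identity directly instead. As this is the same device already used in the first half of Lemma~\ref{L-odd bip}, I do not expect any genuine obstacle in the odd case. The substantive part of Conjecture~\ref{con-full}---the even case, where $A$ may have a symmetric spectrum, the identity $\Eig{A^r}{\mu}=\Eig{A}{\mu^{1/r}}$ breaks, and $D^{1/2}\mathbf{1}$ need only be forced into $\Eig{A}{\mu^{1/r}}\oplus\Eig{A}{-\mu^{1/r}}$---is precisely what this proposition leaves open.
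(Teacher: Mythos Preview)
Your argument is correct and is essentially identical to the paper's: use the odd-$r$ identity $\Eig{A^r}{\mu^r}=\Eig{A}{\mu}$ to pass the eigenvector $D^{1/2}\mathbf{1}$ of $\NL$ (hence of $f(\NL)=A^r$) down to $A$, then invoke Lemma~\ref{L-NL}. Your added commentary on why Lemmas~\ref{L-same}--\ref{L-bothNonneg} do not apply directly and on what remains for the even case is accurate but goes beyond what the paper records.
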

	
	\begin{proof}
		If $r$ is odd then $\Eig{A^r}{\mu^r}=\Eig{A}{\mu}$ for all $\mu$.  If $A^r=f(\NL)$, then $D^{1/2}\mathbf{1}$ is an eigenvector of $f(\NL)$, and hence of $A^r$, and hence of $A$, implying that $G$ is regular or biregular by Lemma~\ref{L-NL}.
	\end{proof}
	We note the following conjecture, which again experimental data suggests is true.
	\begin{conj}\label{con-square}
		If $G$ is connected and $D^{1/2}\mathbf{1}$ is an eigenvector of $A^2$, then $G$ is regular or biregular.
	\end{conj}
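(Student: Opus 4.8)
The plan is to dispose of the non-bipartite case by a direct appeal to Perron--Frobenius, and then to reduce the bipartite case to a sharp statement about the degree sequence.

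Write $x=D^{1/2}\mathbf{1}$, so $x_v=\sqrt{d_v}>0$, and suppose $A^2x=\lambda x$ (here $\lambda>0$ since $A^2x>0$ for any connected graph with an edge). First I would treat the non-bipartite case. There $A$ is primitive, hence so is $A^2$, so a positive eigenvector of $A^2$ must be its Perron eigenvector and $\lambda=\rho(A^2)=\Lambda^2$ with $\Lambda=\rho(A)$. Since $\pf$, the Perron eigenvector of $A$, is positive and satisfies $A^2\pf=\Lambda^2\pf$, uniqueness of the Perron eigenvector of $A^2$ shows $x$ is a scalar multiple of $\pf$; in particular $x$ is an eigenvector of $A$, and Lemma~\ref{L-NL} together with non-bipartiteness forces $G$ regular.

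Next I would handle the bipartite case. Write $A=\begin{pmatrix}0 & M\\ M^{\top} & 0\end{pmatrix}$ for the $V_1\times V_2$ biadjacency matrix $M$, so $A^2=\begin{pmatrix}MM^{\top} & 0\\ 0 & M^{\top}M\end{pmatrix}$; connectedness of $G$ makes $MM^{\top}$ and $M^{\top}M$ irreducible. Writing $x_i=x|_{V_i}>0$, the hypothesis gives $MM^{\top}x_1=\lambda x_1$ and $M^{\top}Mx_2=\lambda x_2$, so $\lambda=\rho(MM^{\top})=\Lambda^2$ and $x_1,x_2$ span the Perron eigenspaces of $MM^{\top}$ and $M^{\top}M$. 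As $M$ has no zero row and $x_2>0$, the vector $Mx_2$ is a positive eigenvector of $MM^{\top}$ at $\lambda$, so (the Perron eigenspace being one-dimensional) $Mx_2=c\,x_1$ for some $c>0$; likewise $M^{\top}x_1=c'\,x_2$ for some $c'>0$, and $cc'=\lambda=\Lambda^2$. In coordinates,
\[
\sum_{u\sim v}\sqrt{d_u}=c\sqrt{d_v}\quad(v\in V_1),\qquad \sum_{u\sim v}\sqrt{d_u}=c'\sqrt{d_v}\quad(v\in V_2).
\]
If $c=c'$ then $Ax=cx$, and Lemma~\ref{L-NL} finishes the argument. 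Since a short computation shows that regular and biregular graphs all force $c=c'$ (the common value being $d$ when $G$ is $d$-regular and $\sqrt{d_1d_2}$ when $G$ is $(d_1,d_2)$-biregular), the conjecture is \emph{equivalent} to the assertion that the two constants above always coincide.

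The hard part will be ruling out $c\ne c'$, and I do not expect linear algebra alone to suffice: from the two identities together with $cc'=\Lambda^2$ one can extract, via Collatz--Wielandt and the fact that $x$ is then not an eigenvector of $A$, only the noncontradictory bound $\min(c,c')<\Lambda<\max(c,c')$. The natural attack is an extremal argument in the spirit of Lemmas~\ref{L-NL}, \ref{L-NLQ}, and \ref{L-NLL}: pick vertices of extreme degree on each side of the bipartition, bound $\sum_{u\sim v}\sqrt{d_u/d_v}$ above and below, and try to force each side to be regular (hence $G$ biregular) or to contradict $cc'=\Lambda^2$; alternatively, sum the two identities over $V_1$ and $V_2$ and apply Cauchy--Schwarz edge by edge. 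The obstruction is that the two conditions are coupled across the bipartition through \emph{two different} constants, so the one-sided extremal bounds fail to close up; closing this gap is precisely the content that the experimental data supports but does not prove. Finally, I would note that a proof of the conjecture would also settle Conjecture~\ref{con-full}: if $A^r=f(\NL)$ with $r=2k$, then $(A^2)^k=f(\NL)$, so by Lemma~\ref{L-same} every eigenvector of $\NL$ --- in particular $D^{1/2}\mathbf{1}$ --- is an eigenvector of $A^2$, while Proposition~\ref{P-odd} covers odd $r$.
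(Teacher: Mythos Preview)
This statement is a \emph{conjecture} in the paper; no proof is offered there, only the remark that experimental data supports it and the observation (the Proposition immediately following) that it would imply Conjecture~\ref{con-full}. So there is no paper proof to compare against, and the relevant question is whether your proposal actually establishes the conjecture.

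It does not, and you say so yourself. Your non-bipartite analysis is correct and is genuine progress beyond what the paper records: since $G$ connected and non-bipartite makes $A$ primitive, $A^2$ is primitive as well, its Perron eigenspace is $\Eig{A}{\Lambda}$ (because $-\Lambda$ is not an eigenvalue), and a positive eigenvector of $A^2$ is forced to lie there; Lemma~\ref{L-NL} then gives regularity. Your bipartite reduction is also correct: the block decomposition yields positive Perron eigenvectors $x_1,x_2$ of $MM^{\top}$ and $M^{\top}M$, the intertwining $Mx_2=c\,x_1$, $M^{\top}x_1=c'\,x_2$ follows from one-dimensionality of those Perron eigenspaces, and the conjecture is indeed equivalent to $c=c'$. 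But the step from ``$cc'=\Lambda^2$ together with the two coupled degree identities'' to ``$c=c'$'' is exactly the missing idea. Neither the Collatz--Wielandt bounds nor the Cauchy--Schwarz/extremal-vertex maneuvers you sketch close the gap, for the reason you give: the two constants live on opposite sides of the bipartition and the one-sided inequalities do not meet. In short, you have isolated a clean equivalent formulation and fully handled the non-bipartite case, but the bipartite case --- and hence the conjecture --- remains open, in agreement with the paper. Your final paragraph reproving that Conjecture~\ref{con-square} implies Conjecture~\ref{con-full} matches the paper's Proposition verbatim in content.
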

	\begin{prop}
		If Conjecture~\ref{con-square} is true, then Conjecture~\ref{con-full} is true.
	\end{prop}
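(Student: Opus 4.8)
The plan is to split on the parity of $r$: the odd case is already settled in the paper, and the even case reduces, via the machinery of Section~\ref{S-Q} and Section~\ref{S-NL}, to exactly the hypothesis of Conjecture~\ref{con-square}.

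So suppose $G$ is connected and $A^r=f(\NL)$. If $r$ is odd, then Proposition~\ref{P-odd} already gives that $G$ is regular or biregular, with no appeal to Conjecture~\ref{con-square} needed. Hence I would assume $r=2k$ for some positive integer $k$ and rewrite the hypothesis as $(A^2)^k=f(\NL)$. The point of this rewriting is that, although $A$ itself need not have nonnegative spectrum, $A^2$ does, and $A^2$ is diagonalizable; this is precisely what is required to apply Lemma~\ref{L-same}.

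Now I would apply Lemma~\ref{L-same} with $X=A^2$, exponent $k$, and $Y=\NL$: since $X^k=(A^2)^k=f(\NL)=f(Y)$, every eigenvector of $\NL$ is an eigenvector of $A^2$. Because $G$ is connected, $D^{1/2}\mathbf{1}$ spans $\Eig{\NL}{0}$ and in particular is an eigenvector of $\NL$; therefore $D^{1/2}\mathbf{1}$ is an eigenvector of $A^2$. Invoking Conjecture~\ref{con-square} then gives that $G$ is regular or biregular, completing the proof.

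Since each step is a direct citation of a lemma or conjecture already available, there is no real obstacle here; the only place calling for a moment's care is the reduction to even $r$ together with the observation that one must take $A^2$, not $A$, as the matrix $X$ in Lemma~\ref{L-same}, so that the nonnegative-spectrum hypothesis of that lemma is met.
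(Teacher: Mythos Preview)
Your proposal is correct and follows essentially the same argument as the paper: split on the parity of $r$, cite Proposition~\ref{P-odd} for odd $r$, and for even $r=2k$ apply Lemma~\ref{L-same} to $(A^2)^k=f(\NL)$ to conclude that $D^{1/2}\mathbf{1}$ is an eigenvector of $A^2$, then invoke Conjecture~\ref{con-square}. Your version is slightly more explicit in checking the hypotheses of Lemma~\ref{L-same}, but the logic is identical.
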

	\begin{proof}
		The case of Conjecture~\ref{con-full} when $r$ is odd is proved in Proposition~\ref{P-odd}.  If $r$ is even then we have $(A^2)^k=f(\NL)$, so $D^{1/2}\mathbf{1}$ will be an eigenvector of $A^2$ by Lemma~\ref{L-same}. Conjecture~\ref{con-square} being true then implies that $G$ is regular or biregular as desired.
	\end{proof}

	\section{General Polynomial Relations}\label{S-gen}
	A more general question one can ask is about the existence of nontrivial polynomials $f$ and $g$ such that  $f(X)=g(Y)$ for $X,\ Y$ matrices of a graph $G$.  By nontrivial we mean that $f$ and $g$ are not of the form $f=up+c,\ g=vq+c$ where $p,\ q$ are the minimal polynomials of $X$ and $Y$ respectively, $c$ is a constant, and $u,\ v$ are arbitrary polynomials.  When this occurs we have the following correspondence between eigenvalues of $X$ and eigenvalues of $Y$.
	
	\begin{prop}\label{P-general}
		Let $X$ and $Y$ be diagonalizable matrices with $f(X)=g(Y)$ for polynomials $f$ and $g$.  If $\lambda_1,\ldots,\lambda_n$ are the eigenvalues of $X$ and $\mu_1,\ldots,\mu_n$ are the eigenvalues of $Y$, then $\{f(\lambda_1),\ldots,f(\lambda_n)\}=\{g(\mu_1),\ldots,g(\mu_n)\}$.
	\end{prop}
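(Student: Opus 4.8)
The plan is to reduce everything to the elementary spectral mapping fact for polynomials of a diagonalizable matrix, together with the trivial observation that $f(X)$ and $g(Y)$ denote one and the same matrix.

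First I would write $X = S\,D_X\,S^{-1}$ with $D_X = \mathrm{diag}(\lambda_1,\ldots,\lambda_n)$, which is possible since $X$ is diagonalizable; then for any polynomial $f$ one gets $f(X) = S\,\mathrm{diag}(f(\lambda_1),\ldots,f(\lambda_n))\,S^{-1}$, so $f(X)$ is diagonalizable and the set of its eigenvalues is exactly $\{f(\lambda_1),\ldots,f(\lambda_n)\}$. Running the identical argument for $Y$ shows that the set of eigenvalues of $g(Y)$ equals $\{g(\mu_1),\ldots,g(\mu_n)\}$.

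Now I would simply invoke the hypothesis: $f(X)=g(Y)$ as matrices, so in particular they have the same spectrum. Comparing this with the two computations of the previous paragraph yields $\{f(\lambda_1),\ldots,f(\lambda_n)\} = \{g(\mu_1),\ldots,g(\mu_n)\}$, which is the desired conclusion.

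There is no real obstacle here; the content is entirely in recalling the spectral mapping principle. The only subtlety worth flagging is that the statement concerns \emph{sets} of eigenvalues, not multisets, so one need not worry about whether $f$ (or $g$) identifies distinct eigenvalues of $X$ (or $Y$). Should a multiset refinement be wanted later in Section~\ref{S-gen}, the extra input would be the dimension count $\dim\Eig{f(X)}{\nu} = \sum_{f(\lambda_i)=\nu}\dim\Eig{X}{\lambda_i}$, which is again immediate from the diagonalization above; but for Proposition~\ref{P-general} as stated this is unnecessary.
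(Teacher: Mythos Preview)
Your proof is correct and follows essentially the same approach as the paper: both arguments use the spectral mapping principle for a polynomial of a diagonalizable matrix (you via the similarity $X=S D_X S^{-1}$, the paper via an eigenbasis $Xv_i=\lambda_i v_i$) to identify the spectrum of $f(X)$ and of $g(Y)$, and then conclude by equating the spectra of the common matrix $f(X)=g(Y)$.
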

	Note that this result holds even if $f$ and $g$ are trivial, but the conclusion isn't particularly interesting.
	\begin{proof}
		Let $Z=f(X)=g(Y)$ and let $V'=\{v_1,\ldots,v_n\}$ be a basis of eigenvectors of $X$ with $Xv_i=\lambda_iv_i$.  Then $Zv_i=f(X)v_i=f(\lambda_i)v_i$, so $Z$ will have eigenvalues $\{f(\lambda_1),\ldots,f(\lambda_n)\}$.  A symmetric argument shows that  $Z$ will have eigenvalues $\{g(\mu_1),\ldots,g(\mu_n)\}$, so these sets must be equal.
	\end{proof}
	
	For example, if $P_4$ denotes the path on 4 vertices then one can compute that
	\[
	A(A^2-2I)=Q^3-5Q^2+6Q-I.
	\]
	One can also compute that the eigenvalues of $A$ are $\frac{1+\sqrt{5}}{2},\ \frac{1-\sqrt{5}}{2},\ \frac{-1+\sqrt{5}}{2},\ \frac{-1-\sqrt{5}}{2}$, and that the eigenvalues of $Q$ are $2+\sqrt{2},\ 2-\sqrt{2},\ 2,\ 0$.  If $f(x)=x(x^2-1)$ and $g(x)=x^3-5x^2+6x-1$, then
	\begin{align*}
	f(\frac{1+\sqrt{5}}{2})=f(\frac{1-\sqrt{5}}{2})&=g(2+\sqrt{2})=g(2-\sqrt{2})\\ 
	f(\frac{-1+\sqrt{5}}{2})=f(\frac{-1-\sqrt{5}}{2})&=g(2)=g(0),
	\end{align*}
	which agrees with Proposition~\ref{P-general}.

	On the other hand, if $G$ denotes the graph which has the following adjacency matrix, then one can prove that there exists no nontrivial relation $f(A)=g(Q)$.
	
	\[
	A_{G}=\begin{bmatrix}
	0 & 1 & 1 & 1 & 1\\ 
	1 & 0 & 1 & 1 & 1\\ 
	1 & 1 & 0 & 1 & 0\\ 
	1 & 1 & 1 & 0 & 0\\ 
	1 & 1 & 0 & 0 & 0
	\end{bmatrix}.
	\]

	The idea of the proof is as follows.   One observes that  the minimal polynomial of $A$ has degree 4, which implies that every power of $A$ can be expressed as a polynomial of $A$ that has degree at most 3.  Thus if a nontrivial polynomial $f$ exists such that $f(A)=g(Q)$, it can be chosen to be of degree 3 or smaller.  The minimal polynomial of $Q$ is also of degree 4, so we again conclude that if $g$ exists it can be chosen to have degree at most 3.  In total, if $f,g$ exist then one can express them as a linear combination of matrices  from the set $\{I,A,A^2,A^3,Q,Q^2,Q^3\}$.  However, one can verify that this collection of matrices (thought of as $5^2$-dimensional vectors) are linearly independent, so there exist no nontrivial polynomials such that $f(A)=g(Q)$.
	
	There does not seem to be an obvious characterization of graphs that satisfy $f(X)=g(Y)$, nor does there seem to be a characterization of what these polynomials $f$ and $g$ look like when this occurs, but we have not investigated this question very thoroughly.  It also does not appear that one can refine Proposition~\ref{P-general} in such a way that, given the eigenvalues of $X$ and the relation $f(X)=g(Y)$, one can compute the eigenvalues of $Y$ in general, but there may exist special classes of relationships like $X^r=f(Y)$ for which this refinement is possible.
	
	One direction for future study would be to answer questions of the following type: let $P$ be a property that a graph can have (such as being $(d_1,d_2)$-biregular or being isomorphic to $P_n$) and two matrices of graphs $X$ and $Y$.  Can one give an explicit (nontrivial) relation $f(X)=g(Y)$ for all graphs satisfying $P$?  If so, can one use this explicit relation to directly relate the eigenvalues of $X$ and $Y$ for graphs satisfying  $P$?  For example, we have the theorem that if $G$ is $(d_1,d_2)$-biregular, then $A^2=(Q-d_1I)(Q-d_2I)$.  An example of another problem of this type is as follows:
	\begin{quest}
		Are there (non-trivial) functions $f_n,\ g_n$ such that $f_n(A)=g_n(Q)$ when $G=P_n$ for all $n$?  If so, can one give an explicit (nice) construction of such functions?
	\end{quest}
	
	Another direction to explore would be to generalize results like Theorem~\ref{T-two}, stating that the only graphs satisfying $A^r=f(Q)$ are those that are regular or biregular.  One could instead ask the following question: given matrices of graphs $X$ and $Y$ and a family of ordered pairs of polynomials $\mathcal{F}=\{(f,g)\}$, does there exist a (nice) property $P$ such that the only graphs satisfying $f(X)=g(Y)$ for some $(f,g)\in \mathcal{F}$ are those satisfying $P$?  For example, we have the following result.
	\begin{prop}\label{P-deg2}
		If $f(A)=g(L)$ where $f$ is a polynomial of degree at most 2 with nonnegative coefficients and $g$ is an arbitrary polynomial, then $G$ is regular or biregular.  Moreover, if $f$ can't be chosen to be $f(x)=x^2$, then $G$ is regular.
	\end{prop}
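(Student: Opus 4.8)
The plan is to test the hypothesised identity $f(A)=g(L)$ against the all-ones vector $\mathbf{1}$, exactly in the spirit of the arguments in Sections~\ref{S-Q} and~\ref{S-NL}. Write $f(x)=\alpha x^2+\beta x+\gamma$ with $\alpha,\beta,\gamma\ge 0$; one may assume $f$ is non-constant, since a constant $f$ produces a trivial relation satisfied by every graph. Because $L\mathbf{1}=0$ we have $g(L)\mathbf{1}=g(0)\mathbf{1}$, hence $f(A)\mathbf{1}=g(0)\mathbf{1}$. Using $(A\mathbf{1})_v=d_v$ together with the identity $(A^2\mathbf{1})_v=\sum_{uv\in E(G)}d_u$ (established inside the proof of Lemma~\ref{L-1}), this becomes
\[
\alpha\sum_{uv\in E(G)}d_u+\beta d_v+\gamma=g(0)\qquad\text{for every }v\in V(G).
\]

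The second step is the minimum/maximum degree comparison used repeatedly in the paper. Let $d$ and $D$ be the least and greatest degrees of $G$, attained at vertices $v_0$ and $v_1$. Evaluating the displayed identity at $v_0$ and bounding each of the $d$ terms of its sum above by $D$ gives $g(0)\le\alpha dD+\beta d+\gamma$; evaluating at $v_1$ and bounding each of the $D$ terms of its sum below by $d$ gives $g(0)\ge\alpha dD+\beta D+\gamma$. Subtracting these, $\beta(D-d)\le 0$, and since $\beta\ge 0$ and $D\ge d$ this forces either $D=d$ or $\beta=0$.

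To finish, I would split on these two alternatives. If $D=d$ then $G$ is regular. If instead $\beta=0$, then $\alpha\ne 0$ (as $f$ is non-constant), so $A^2=\alpha^{-1}\bigl(g(L)-\gamma I\bigr)$ is a polynomial in $L$, and Theorem~\ref{T-two} applied with $r=2$ gives that $G$ is regular or biregular; this proves the first assertion. For the ``moreover'' clause I would argue contrapositively, and here the slickest route is the remark following Proposition~\ref{P-keyrel}: if $G$ were biregular then $A^2=(L-d_1I)(L-d_2I)$ already holds, so $f$ could be chosen to be $x^2$; hence if $f$ cannot be so chosen, the first part leaves only the possibility that $G$ is regular. (Equivalently, the dichotomy above shows directly that a non-regular $G$ must have $\beta=0$ and $\alpha\ne0$, whence $A^2$ is a polynomial in $L$.)

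I do not expect a serious obstacle: the argument is essentially one eigenvector test followed by the standard degree inequality. The only points needing care are handling—and excluding—the degenerate constant $f$, and recognising that the $\beta=0$ branch is not a genuinely new phenomenon but exactly the biregular possibility, which is available precisely because the linear term vanishes and which reduces to the already-proved Theorem~\ref{T-two}. Should one wish to avoid citing Theorem~\ref{T-two} in that branch, one can instead force all inequalities in the degree comparison to equalities to conclude that $\sum_{uv\in E(G)}d_u$ is constant in $v$, and then reuse the neighbour-degree analysis from the proof of Lemma~\ref{L-1} to reach the same conclusion.
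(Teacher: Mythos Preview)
Your proof is correct and follows essentially the same approach as the paper: apply the relation to $\mathbf{1}$, use $L\mathbf{1}=0$, and run the minimum/maximum degree comparison. The only cosmetic differences are that the paper first normalises $f$ to be monic with zero constant term (so the cases are $f(x)=x$ and $f(x)=x^2+ax$), and in the $\beta=0$ branch it argues directly from equality in the inequality chain (as in Lemma~\ref{L-1}) rather than invoking Theorem~\ref{T-two}; you already note this alternative yourself.
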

	\begin{proof}
		If these polynomials exist, choose them such  that  $f$ is monic and has no constant term.  Let $c$ denote the constant term of $g$.  If $f(x)=x$ then $A\mathbf{1}=g(L)\mathbf{1}=c\mathbf{1}$ (since $L\mathbf{1}=0$), and this implies that  $G$ is $c$-regular.  If $f(x)=x^2+ax$, then $A^2\mathbf{1}+aA\mathbf{1}=f(A)\mathbf{1}=g(L)\mathbf{1}=c\mathbf{1}$.  We conclude that $a d_v+\sum_{uv\in E(G)}d_u=c$ for all $v$ by using the same logic as in Lemma~\ref{L-1}.  If $v$ is a vertex of minimum degree $d$ and $v'$ is a vertex of maximum degree $D$ we have (noting that $a\ge 0$)
		\[
		c=ad+\sum_{uv\in E(G)}d_u\le ad+dD\le aD+dD\le aD+\sum_{uv'\in E(G)}d_u=c,
		\]
		so we conclude that all inequalities are equalities.  If $a\ne 0$ this implies  that  $d=D$, making $G$ regular.  If $a=0$ and $d\ne D$, then one can partition the vertices of $G$ into those with degree $d$ and those with degree $D$, making $G$ biregular.
	\end{proof}
	We note that the assumption that $f$ have nonnegative coefficients can not be relaxed.  Indeed, let $G'$ be defined by the adjacency matrix 
	
	\begin{equation}\label{eq}
	A_{G'}=\begin{bmatrix}
	0 & 1 & 1 & 1 & 1\\ 
	1 & 0 & 0 & 0 & 1\\
	1 & 0 & 0 & 1 & 0\\
	1 & 0 & 1 & 0 & 0\\ 
	1 & 1 & 0 & 0 & 0
	\end{bmatrix}.
	\end{equation}
	One can check that in this case $3A^2-3A=-L^3+9L^2-20L+12I$.

	We have analogous results for the signless Laplacian.
	
	\begin{prop}\label{P-Qdeg2}
		If $f(Q)=g(L)$ where $f$ is a polynomial of degree at most 2 with nonnegative coefficients and $g$ is an arbitrary polynomial, then $G$ is regular.
	\end{prop}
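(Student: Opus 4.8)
The plan is to run the argument of Proposition~\ref{P-deg2}, with two changes: the vector playing the active role is now $\mathbf{1}$ in its capacity as the $0$-eigenvector of $L$ (rather than acting through $A$), and the quadratic constraint that appears is a pure $d_v^2$, which is more rigid than the constraint in the $A$-case. As in Proposition~\ref{P-deg2}, if such polynomials exist I would first normalize so that $f$ is monic with no constant term, i.e.\ $f(x)=x$ or $f(x)=x^2+ax$ with $a\ge 0$; let $c$ denote the resulting constant term of $g$. Since $\mathbf{1}\in\Eig{L}{0}$, applying both sides of $f(Q)=g(L)$ to $\mathbf{1}$ gives $f(Q)\mathbf{1}=g(L)\mathbf{1}=c\mathbf{1}$.

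The degree-one case is immediate: $Q\mathbf{1}=c\mathbf{1}$ and $(Q\mathbf{1})_v=d_v+d_v=2d_v$, so every vertex has degree $c/2$ and $G$ is regular.

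For $f(x)=x^2+ax$ I would compute $f(Q)\mathbf{1}$ entrywise. From $(Q\mathbf{1})_v=2d_v$, applying $Q$ once more gives $(Q^2\mathbf{1})_v=2d_v^2+2\sum_{uv\in E(G)}d_u$ by the same walk-counting bookkeeping used in Lemma~\ref{L-1}, so the relation becomes $d_v^2+\sum_{uv\in E(G)}d_u+a d_v=c/2$ for all $v$. Then I would apply the minimum/maximum-degree sandwich from Lemma~\ref{L-1} and Proposition~\ref{P-deg2}: with $v$ of minimum degree $d$ and $v'$ of maximum degree $D$, the bounds $\sum_{uv\in E}d_u\le dD$ and $\sum_{uv'\in E}d_u\ge Dd$ give, after cancelling the common $dD$ term, the inequality $D^2+aD\le c/2\le d^2+ad$. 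Since $a\ge 0$ and $0\le d\le D$, strict monotonicity of $x\mapsto x^2+ax$ on $[0,\infty)$ forces $d=D$, so $G$ is regular.

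I do not anticipate a genuine obstacle. The one point worth emphasizing is why the biregular possibility, present in Proposition~\ref{P-deg2}, does not arise here: the constraint now contains a $d_v^2$ term with positive coefficient, which is strictly increasing in $d_v$ regardless of $a$, whereas the analogous $A$-constraint $a d_v+\sum_{uv\in E}d_u=c$ had no pure-$d_v^2$ term and hence left room for the biregular solutions. The expansion of $Q^2\mathbf{1}$ and the monotonicity verification are both routine.
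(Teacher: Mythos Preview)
Your argument is correct and follows the paper's proof essentially line for line: normalize $f$, use $g(L)\mathbf{1}=c\mathbf{1}$, compute $(Q^2\mathbf{1})_v=2d_v^2+2\sum_{uv\in E(G)}d_u$, and run the min/max degree sandwich to force $d=D$. One small slip: after cancelling the common $dD$ term the middle of your chain should read $c/2-dD$ rather than $c/2$, but the conclusion $D^2+aD\le d^2+ad$ (hence $d=D$) is unaffected.
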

	\begin{proof}
		If these polynomials exist, choose them such that  $f$ is monic and has no constant term and let $c$ denote the constant term of $g$.  To proceed as in Proposition~\ref{P-deg2}, we will need to understand how $\mathbf{1}$ interacts with $Q$ and $Q^2$.  It is clear that $(Q\mathbf{1})_v=2d_v$.  For $Q^2$ we have
		\[
		Q^2=(A+D)^2=A^2+AD+DA+D^2
		\]
		We know that $(A^2\mathbf{1})_v=\sum_{uv\in E(G)}d_u$, and it isn't difficult to see that \[(AD\mathbf{1})_v=\sum_{uv\in E(G)}d_u,\ (DA\mathbf{1})_v=d_v^2,\ (D^2\mathbf{1})_v=d_v^2.\]  Thus in total we have $(Q^2\mathbf{1})_v=2d_v^2+2\sum_{uv\in E(G)}d_u$.
		
		If $f(x)=x$, then $Q\mathbf{1}=f(L)\mathbf{1}=c\mathbf{1}$, and this implies that $G$ is $c/2$-regular.  If $f(x)=x^2+ax$ then we conclude that $Q^2\mathbf{1}+aQ\mathbf{1}=c\mathbf{1}$.  By comparing the $v$th coordinates of both sides, we see that $d_v^2+ad_v+\sum_{uv\in E(G)}d_u=c/2$ and this holds for all $v$.  If $v$ denotes a vertex with minimum degree $d$ and $v'$ a vertex with maximum degree $D$ then
		\begin{align*}
		c/2=d^2+ad+&\sum_{uv\in E(G)}d_u\le d^2+ad+dD \le\\  D^2+aD+dD&\le
		D^2+aD+\sum_{uv'\in E(G)}d_u=c/2,
		\end{align*}
		so the inequalities must be equalities and we conclude that $d=D$, making $G$ regular.
	\end{proof}
	Again the condition that $f$ have nonnegative coefficients can not be weakened.  Indeed, if $G$ is a $(d_1,d_2)$-biregular graph then $(Q-d_1I)(Q-d_2I)=(L-d_1I)(L-d_2I)$.  While  biregular graphs are the most obvious counterexample, they are not the only ones.  For example, if we consider $G'$ as defined in \eqref{eq}, then one can show that $3Q^2-21Q=-2L^3+15L^2-25L-24I$.
	
	One can also ask whether polynomials $f$ and $g$ exist such that $f(X)=g(Y)$ when $X$ is a matrix associated to a graph $G$ and $Y$ is not.  One such example is $Y=J$, the $n\times n$ matrix whose entries are all 1.  Note that $J$ has rank 1, so the only non-trivial polynomials of $J$ are of the form $cI+dJ$ with $d\ne 0$.  Thus if $f$ and $g$ exist such that $f(X)=g(J)$, one can always choose $g(J)=J$.
	
	\begin{lem}\label{L-Jreg}
		If $f(A)=J$ for some polynomial $f(x)$, then $G$ is regular and connected.
	\end{lem}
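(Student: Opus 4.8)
The plan is to treat connectedness and regularity as two separate, easy observations. For connectedness I would argue by contrapositive: if $G$ is disconnected and has at least two vertices, then after relabeling the vertices $A$ is block diagonal, so every power $A^k$, and hence every polynomial $f(A)$, is block diagonal; but $J$ has all entries equal to $1$ and in particular no off-block zero entries, so $f(A)=J$ is impossible. (Equivalently, for $u\ne v$ lying in different components, $(A^k)_{uv}=0$ for every $k\ge 0$, so $(f(A))_{uv}=0\ne 1=J_{uv}$.) The single-vertex case is vacuously connected. Hence $G$ is connected.

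For regularity the cleanest route is to exploit that $J=f(A)$ is a polynomial in $A$ and therefore commutes with $A$, so $AJ=JA$. Computing entries, $(AJ)_{uv}=\sum_{w}A_{uw}J_{wv}=\sum_w A_{uw}=d_u$, while $(JA)_{uv}=\sum_w J_{uw}A_{wv}=\sum_w A_{wv}=d_v$. Thus $AJ=JA$ forces $d_u=d_v$ for all vertices $u,v$, so $G$ is $d$-regular for some $d$. Alternatively, and more in keeping with the earlier sections, once $G$ is known to be connected one can invoke the Perron--Frobenius theorem (Theorem~\ref{T-pf}) to obtain the positive eigenvector $\pf$ of $A$ with $A\pf=\Lambda\pf$; then $f(\Lambda)\pf=f(A)\pf=J\pf=\bigl(\sum_i\pf_i\bigr)\mathbf{1}$, and since $\sum_i\pf_i>0$ the right-hand side is a nonzero multiple of $\mathbf{1}$, so $\pf$ is a positive scalar multiple of $\mathbf{1}$. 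Hence $\mathbf{1}$ is an eigenvector of $A$, meaning every vertex has the same degree and $G$ is regular.

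I do not expect a real obstacle here. The only points needing a line of care are: noting that $f(A)$ commutes with $A$ simply because it is a polynomial in $A$ (so no hypothesis on $f$ beyond being a polynomial is used), and, in the Perron--Frobenius variant, checking $f(\Lambda)\ne 0$, which is immediate from positivity of $\pf$. The degenerate situations (constant $f$, a single-vertex $G$) are trivially consistent with the statement and can be dismissed in a sentence. I would most likely present the commuting-matrices argument as the main proof, since it is short and self-contained, and remark on the Perron--Frobenius alternative.
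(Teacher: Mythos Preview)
Your proof is correct. The connectedness argument is essentially identical to the paper's: if $u,v$ lie in different components then $(A^k)_{uv}=0$ for all $k$, so $(f(A))_{uv}=0\ne 1$.

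For regularity your main argument differs from the paper's. The paper uses exactly the Perron--Frobenius route you sketch as an alternative: it applies $f(A)=J$ to the positive Perron eigenvector $\pf$, concludes that $\pf$ is an eigenvector of $J$, and then observes that the only positive eigenvectors of $J$ are scalar multiples of $\mathbf{1}$. Your preferred argument, using $AJ=Af(A)=f(A)A=JA$ and reading off $(AJ)_{uv}=d_u$, $(JA)_{uv}=d_v$, is a genuinely different and more elementary proof: it avoids Perron--Frobenius entirely and does not even need connectedness as a prerequisite for the regularity step. What the paper's approach buys is consistency with the methodology used elsewhere in the paper (Lemmas~\ref{L-same}--\ref{L-HW} and Theorem~\ref{T-pair} all hinge on forcing $\pf$ to be an eigenvector of a second matrix), so the reader sees the same idea reused rather than a new trick. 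Either argument is perfectly adequate here.
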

	Note that all the matrices that we considered earlier had the property that $X_G=X_{G_1}\oplus X_{G_2}$ whenever $G$ was the disjoint union of the graphs $G_1$ and $G_2$.  This meant that the relation $f(X_G)=g(Y_G)$ held iff $f(X_{G'})=g(Y_{G'})$ held for any connected component $G'$ of $G$.  This is not the case when considering $J$, so we emphasize here the fact that $G$ must be connected.
	
	\begin{proof}
		Assume that such an $f$ exists.  If vertex $i$ and vertex $j$ belong to different components of $G$, then for all $r$, $A^r_{ij}=0$, which implies that there exists no polynomial such that $f(A)=J$.  It follows that $G$ must be connected.
		
		If $\pf$ is the positive eigenvector of $A$ guaranteed by the Perron-Frobenius theorem, then
		\[
		J\pf=f(A)\pf=f(\Lambda)\pf,
		\]
		so $\pf$ is an eigenvector of $J$, but the only positive eigenvectors of $J$ are scaler multiples of $\mathbf{1}$, so $\pf=c\mathbf{1}$ for some $c$, which means $G$ must be regular.
	\end{proof}
	Let $m_A(x)$ denote the minimal polynomial of $A$.  If $G$ is a $k$-regular graph, let $m'_A(x)=m_A(x)/(x-k)$.  Note that $m'_A(x)=\prod_i(x-\lambda_i)$, where the $\lambda_i$ range over all distinct eigenvalues of $A$ that are not equal to $k$.
	\begin{lem}\label{L-divide}
		If $f(A)=J$, then $f(x)\mid m'_A(x)$.
	\end{lem}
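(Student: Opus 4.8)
The plan is to combine the spectral decomposition of the (symmetric, hence diagonalizable) matrix $A$ with the conclusion of Lemma~\ref{L-Jreg} that $G$ is connected and $k$-regular. First I would reduce to the case $\deg f<\deg m_A$: since $f(A)$ depends only on the residue of $f$ modulo the minimal polynomial $m_A$, we may replace $f$ by its remainder on division by $m_A$ without altering $f(A)=J$, and it suffices to prove the divisibility for this representative. Write $t=\deg m_A$, the number of distinct eigenvalues of $A$, so that $\deg m'_A=t-1$ and the reduced $f$ satisfies $\deg f\le t-1$.

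Next, because $G$ is connected and $k$-regular, $\mathbf{1}$ spans $\Eig{A}{k}$ (it is the Perron eigenvector), and every eigenvector of $A$ for an eigenvalue $\lambda\ne k$ is orthogonal to $\mathbf{1}$. Since $J=\mathbf{1}\mathbf{1}^{T}$, we have $J\mathbf{1}=n\mathbf{1}$ and $Jv=0$ for every $v\perp\mathbf{1}$. Now let $\lambda$ be any distinct eigenvalue of $A$ with $\lambda\ne k$ and let $v$ be a corresponding eigenvector; applying $f(A)=J$ to $v$ gives $f(\lambda)v=f(A)v=Jv=0$, hence $f(\lambda)=0$. Thus each of the $t-1$ distinct eigenvalues $\lambda_i\ne k$ is a root of $f$, which forces $m'_A(x)=\prod_i(x-\lambda_i)$ to divide $f(x)$.

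Finally, combining $m'_A\mid f$ with $\deg f\le t-1=\deg m'_A$ shows $f=c\,m'_A$ for some constant $c$. Applying $f(A)=J$ to $\mathbf{1}$ gives $f(k)\mathbf{1}=J\mathbf{1}=n\mathbf{1}$, so $f(k)=n\ne 0$, while $m'_A(k)\ne 0$ since $k$ is not among the $\lambda_i$; hence $c=n/m'_A(k)\ne 0$ and $m'_A(x)=c^{-1}f(x)$, i.e.\ $f(x)\mid m'_A(x)$. The only genuine subtlety is the opening reduction step: the easy direction of the argument actually yields $m'_A\mid f$, and it is precisely the degree bound coming from reducing $f$ modulo $m_A$ that turns this into the asserted divisibility $f\mid m'_A$.
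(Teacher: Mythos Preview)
Your second paragraph is exactly the paper's argument: for any eigenvector $v$ of $A$ with eigenvalue $\lambda\ne k$ one has $f(\lambda)v=f(A)v=Jv=0$, so every $\lambda_i\ne k$ is a root of $f$, and hence $m'_A(x)=\prod_i(x-\lambda_i)$ divides $f(x)$. You correctly observe that this yields $m'_A\mid f$, the \emph{reverse} of the divisibility written in the lemma.

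Your attempt to convert this into $f\mid m'_A$ via the reduction step does not work. Passing from $f$ to its remainder $\bar f$ modulo $m_A$ preserves $f(A)=J$, and your argument then shows $\bar f=c\,m'_A$ with $c\ne 0$; but this only gives $\bar f\mid m'_A$, not $f\mid m'_A$ for the original polynomial. Indeed the statement as literally written is false: for $G=K_n$ with $n\ge 3$ one has $m_A(x)=(x+1)(x-n+1)$, $m'_A(x)=x+1$, and $m'_A(A)=J$, so $g(x)=(x+1)+m_A(x)=(x+1)(x-n+2)$ also satisfies $g(A)=J$, yet $\deg g=2>1=\deg m'_A$ and $g\nmid m'_A$. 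The divisibility in the lemma is simply written the wrong way round in the paper; the paper's own proof and its sole use of the lemma in the next theorem (to conclude $\deg f\ge \deg m'_A$ when $f$ has minimum degree) both require $m'_A\mid f$, which is precisely what your middle paragraph---and the paper's proof---establish. So drop the reduction-mod-$m_A$ manoeuvre and state the conclusion as $m'_A(x)\mid f(x)$.
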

	\begin{proof}
		By Lemma~\ref{L-Jreg} we can assume that $G$ is $k$-regular for some $k$.  Let $v$ be an eigenvector of $A$ with eigenvalue $\lambda\ne k$.  We have
		\[
		Jv=f(A)v=f(\lambda)v,
		\]
		so $v$ is an eigenvector of $J$ with eigenvalue $f(\lambda)$.  But  $v\ne c\mathbf{1}$ by assumption of $\lambda\ne k$, so it must be that $v$ is a null-vector of $J$ and $f(\lambda)=0$.  As every distinct eigenvalue  of $A$ not equal to $k$ is a root of $f$, we conclude that $f(x)\mid m'_A(x)$.
	\end{proof}
	\begin{thm}
		A polynomial $f(x)$ exists such that $f(A)=J$ iff $G$ is connected and regular.  Moreover, if $f$ is chosen to have minimum degree, then $f(x)=cm'_A(x)$ for some $c\ne 0$.
	\end{thm}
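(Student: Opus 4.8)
The plan is to prove the two implications together with the ``moreover'' clause, leaning on the two lemmas just established. One direction is immediate: if $f(A)=J$, then Lemma~\ref{L-Jreg} already gives that $G$ is connected and regular, so nothing further is needed there.

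For the converse, suppose $G$ is connected and $k$-regular. The structural facts I would use are that $A$ is symmetric, hence diagonalizable with an orthogonal eigenbasis, and that $A$ is irreducible since $G$ is connected, so by Perron--Frobenius the eigenvalue $k$ is simple with eigenvector $\mathbf{1}$; moreover every eigenvector of $A$ with eigenvalue $\neq k$ is orthogonal to $\mathbf{1}$ (symmetry of $A$), and because $k$ is simple these eigenvectors span the hyperplane $\mathbf{1}^{\perp}$. Now I would set $f(x)=c\,m'_A(x)$. For an eigenvector $v$ of $A$ with eigenvalue $\lambda\neq k$ we get $f(A)v=c\,m'_A(\lambda)v=0$ since $\lambda$ is a root of $m'_A$, while $f(A)\mathbf{1}=c\,m'_A(k)\mathbf{1}$. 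Since $k$ is not a root of $m'_A$, we have $m'_A(k)=\prod_i(k-\lambda_i)\neq 0$, so choosing $c=n/m'_A(k)$ makes $f(A)$ agree with $J$ on the whole eigenbasis of $A$: both annihilate $\mathbf{1}^{\perp}$ and both send $\mathbf{1}$ to $n\mathbf{1}$. Hence $f(A)=J$, which proves the ``if'' direction and exhibits an $f$ of the claimed form.

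For the minimum-degree statement, suppose $f(A)=J$. Rerunning the argument in the proof of Lemma~\ref{L-divide}: for each eigenvalue $\lambda\neq k$ with eigenvector $v$ we have $Jv=f(A)v=f(\lambda)v$ and $v\perp\mathbf{1}$, forcing $Jv=0$ and so $f(\lambda)=0$. Thus $f$ has every such $\lambda$ as a root, and since $m'_A$ is the squarefree product of the corresponding linear factors we get $m'_A(x)\mid f(x)$, hence $\deg f\ge\deg m'_A$. The polynomial $c\,m'_A$ constructed above attains this bound, so the minimum degree is exactly $\deg m'_A$, and any $f$ of that degree with $f(A)=J$ is a scalar multiple $c\,m'_A$, with $c\neq 0$ because $J\neq 0$.

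The only step that needs genuine care — the ``main obstacle,'' though a mild one — is verifying that $f(A)$ equals $J$ on the nose rather than merely a nonzero scalar multiple of $J$; this is precisely what pins down the normalization $c=n/m'_A(k)$, and it uses both that $k$ is a \emph{simple} eigenvalue (so that the remaining eigenvectors span $\mathbf{1}^{\perp}$) and that $m'_A(k)\neq 0$.
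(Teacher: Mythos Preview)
Your proof is correct and follows essentially the same strategy as the paper: both directions invoke Lemma~\ref{L-Jreg} and Lemma~\ref{L-divide}, and the construction in the converse direction shows $m'_A(A)$ is a nonzero scalar multiple of $J$. The only difference is cosmetic: the paper argues that $(A-kI)m'_A(A)=m_A(A)=0$ forces every column of $m'_A(A)$ into the one-dimensional null-space of $A-kI$ (spanned by $\mathbf{1}$) and then invokes symmetry, whereas you verify $m'_A(A)=cJ$ directly on an orthogonal eigenbasis and in the process pin down the constant $c=n/m'_A(k)$ explicitly.
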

	\begin{proof}
		Lemma~\ref{L-Jreg} gives the forward direction, so assume $G$ is connected and $k$-regular.  This implies that the null-space of $A-kI$ has dimension 1 and is spanned by $\mathbf{1}$.  We also have \[(A-kI)m'_A(A)=m_A(A)=0.\]  These two facts imply that every column of $m'_A(A)$ is a scaler multiple of $\mathbf{1}$.  As $m'_A(A)$ is a symmetric matrix, we must have $m'_A(A)=c\mathbf{1}\mathbf{1}^T=cJ$ for some $c\ne  0$, so $f(x)=\frac{1}{c}m'_A(x)$ gives the desired polynomial.
		
		Finally, assume $f(A)=J$ where $f(x)$ is chosen to have minimum degree.  From the above proof we know that $\deg(f)$ can be at most $\deg(m'_A)$, but by Lemma~\ref{L-divide} we must have $\deg(f)\ge \deg(m'_A)$.  We conclude that $\deg(f)=\deg(m'_A)$ and that $f(x)=cm'_A(x)$ for some $c\ne 0$.
	\end{proof}  
	The above statement implies that if $G$ is a connected, $k$-regular graph such that $A$ has $r+1$ distinct eigenvalues, then there exists $c,d\ne 0$ such that $cm'_A(x)$ is a monic polynomial of degree $r$, and $cm'_A(A)=dJ$. The $r=2$ case corresponds to connected strongly regular graphs, which are usually defined combinatorially as is done so in \cite{AGT}, for example.  Given any connected strongly regular graph, one can derive an equation of the form $cm'_A(A)=dJ$ by having the coefficients of the polynomial be defined in terms of combinatorial parameters of the graph.  It would be interesting to know if this process could be reversed in general.  That is, can one always interpret the coefficients of the equation $cm'_A(A)=dJ$ in terms of certain parameters of the underlying graph, and can these parameters be used to give a combinatorial description of connected regular graphs with precisely $r+1$ eigenvalues?
	
	\section{Acknowledgments}
	The author would like to thank Richard Stanley for suggesting this research topic, as well as his assistance with the general structure of the paper.
	
	\bibliographystyle{plain}
	\bibliography{Matrices}
\end{document}